\newtheorem{theorem}{Theorem}
\newtheorem{definition}{Definition}
\newtheorem{condition}{Condition}
\newtheorem{remark}{Remark}
\newcommand{\eps}{\varepsilon}
\newcommand{\bd}{\begin{description}}
\newcommand{\ed}{\end{description}}
\newcommand{\R}{{\mathbb R}}
\newcommand{\EE}{{\cal E}}
\def\squarebox#1{\hbox to #1{\hfill\vbox to #1{\vfill}}}
\newcommand{\beaa}{\begin{eqnarray*}}
\newcommand{\eeaa}{\end{eqnarray*}}
\newcommand{\bt}{\begin{theorem}}
\newcommand{\et}{\end{theorem}}
\newcounter{bean}
\newcommand{\benuma}{\setlength{\labelwidth}{.25in}
\begin{list}%
{(\alph{bean})}{\usecounter{bean}}}
\newcommand{\eenuma}{\end{list}}
\newcommand{\be}{\begin{equation}}
\newcommand{\ee}{\end{equation}}
\newcommand{\Df}{\doteq}
\newcommand{\beq}{\begin{eqnarray*}}
\newcommand{\eeq}{\end{eqnarray*}}
\newcommand{\beqn}{\begin{eqnarray}}
\newcommand{\eeqn}{\end{eqnarray}}
\newcommand{\inn}[2]{\langle {#1}, {#2}\rangle}
\newcommand{\ep}{\varepsilon}
\begin{document}

\author{Vasileios Maroulas\thanks{%
Institute for Mathematics and its Applications, University of
Minnesota, Minneapolis, MN 55455, USA. Research supported in part by
IMA. Email:maroulas@ima.umn.edu}.}
\title{Uniform Large Deviations for $\infty-$dimensional stochastic systems with jumps.}
\date{\today}
\maketitle

\begin{abstract}
Uniform large deviation principles for positive functionals of all equivalent types of infinite dimensional Brownian motions acting together with a Poisson random measure are established. The core of our approach is a variational representation formula which for an infinite sequence of i.i.d real Brownian motions and a Poisson random measure was shown in \cite{BDM3}.
\end{abstract}

% \begin{keyword}[class=AMS]
% \kwd[Primary ]{60H15} \kwd{60F10} \kwd[; secondary ]{37L55}
% \end{keyword}
%
% \begin{keyword}
% \kwd{Large deviations} \kwd{Brownian sheet} \kwd{Freidlin-Wentzell
% LDP} \kwd{stochastic partial differential equations} \kwd{stochastic
% evolution equations} \kwd{small noise asymptotics} \kwd{infinite
% dimensional Brownian motion}
% \end{keyword}

\section{Introduction}
The theory of large deviations is one of the most active research fields in probability, having many applications to areas
such as statistical inference, queueing systems, communication networks, information
theory, risk sensitive control, partial differential equations and statistical mechanics. We
refer the reader to \cite{DeZe,DeSt,DuEl,Var} for background, motivation, applications and fundamental
results in the area. In this paper we establish a general uniform large deviation for functionals of a Poisson random measure (PRM) and infinite dimensional Brownian motion. These two types of driving noises are used in a wide range of processes describing various physical and/or financial phenomena, e.g. reaction-diffusion of particles, environmental pollution, stock return, etc. The uniform large deviation result is expected to be fruitful in the study of asymptotics of steady state behavior for such infinite dimensional stochastic partial differential equations with jumps describing the aforementioned phenomena. The uniformity is with respect to a parameter $\zeta$ which takes values in some compact subset of a Polish space $\mathcal{E}_0$. Typically, $\zeta$ is the initial condition of the corresponding stochastic partial differential equation (SPDE) whose solution's large deviation estimates are considered. A similar  large deviation result for functionals of an infinite dimensional Brownian motion was established in \cite{buddup} and its uniform analogue in \cite{BDM1}. These results were used to study small noise asymptotics for a variety of infinite dimensional stochastic dynamical models and a partial list of such studies is \cite{BeMi,BDM1,BDM2, ChMi, DuDuGa, DuMi, Liu,MaSrSu,ReZh1, ReZh2, RoZhZh, SrSu, WaDu, YaHo, zha3, zha4}) 

Our approach to the large deviation analysis is based on a variational representation for Polish space valued functionals of a PRM and infinite dimensional Brownian motion. Such a variational result was established prior in \cite{BDM3} for an infinite sequence of standard real Brownian motions and a Poisson random measure. Depending on the application, the infinite nature of the Brownain noise may be equivalently expressed as a Brownian sheet, a Hilbert-space valued Brownian motion, or a cylindrical Brownian motion. In this paper the variational representation result for functionals of any tantamount type of infinite dimensional Brownian motion and a PRM will be presented.

A key ingredient in formulating the variational formulation is the appropriate version of controlled PRM and infinite Brownian motion which will be used for purposes of representation. In the Brownian case, the control shifts the mean. In the Poisson random measure case, the control process enters as a
censoring/thinning function, which in turn allows
for elementary weak convergence arguments in proofs of large deviation
results. In \cite{zha}, Zhang has also proved a variational representation for
functionals of a PRM. The corresponding control there moves the atoms of the
Poisson random measure through a rather complex nonlinear transformation. However, the fact that atoms are neither created nor destroyed is partly responsible
for the fact that the representation does not cover the standard Poisson
process. 

The usefulness of the representations is the fact that this approach does not require any exponential probability estimates to be established. 
Exponential continuity (in probability) and exponential tightness estimates are perhaps the hardest and most technical parts of the usual proofs
based on discretization and approximation arguments and this becomes particularly hard in infinite dimensional settings
where these estimates are needed with metrics on exotic function spaces. 
% For example, using a variational representation, the exponential tightness is replaced by ordinary tightness for
% controlled processes with uniformly bounded control costs. (Although
% exponential tightness can a posteriori be obtained as consequence of the
% large deviation principle (LDP) and properties of the rate function). 
Furthermore what is required for the weak convergence approach, beyond the variational
representations, is that basic qualitative properties (existence, uniqueness
and law of large number limits) can be demonstrated for certain controlled
versions of the original process. 

%The representation in \cite{zha} is given in terms of
% certain predictable transformations on the canonical Poisson space.
% Existence of such transformations relies on solvability of certain nonlinear
% partial differential equations from the theory of mass transportation. This
% imposes restrictive conditions on the intensity measure (e.g., absolute
% continuity with respect to Lebesgue measure) of the PRM. In contrast, we impose very mild assumptions on the intensity measure
% (namely, it is a $\sigma $-finite measure on a locally compact space) and the representation is given in terms of a fixed Poisson random measure defined on an augmented
% space. Furthermore, in \cite{zha} 

We now give an outline of the paper.  Section \ref{prelim} contains
some background material on large deviations, infinite
dimensional Brownian motions and a Poisson random measure. In Section \ref{varrep}, we present a variational representation for bounded nonnegative functionals of an infinite sequence of real Brownian motions and PRM. This variational representation, originally obtained in \cite{BDM3}, is the starting point of our study. We also provide analogous representations for other formulations of infinite dimensional Brownian motions and Poisson random measure. Section \ref{ldpgen}, the main section of this paper, gives a uniform large deviation result for Polish space valued functionals of infinite dimensional Brownian motions and Poisson random measure. Sufficient conditions for the uniform LDP for each of the formulations of an infinite dimensional Brownian motion mentioned above are provided.

\noindent \textbf{Notation and a topology.} The following notation will be
used. 
% For a metric space $\mathbb{S}$ denote by $M_{b}(\mathbb{S)}$, $C(%
% \mathbb{S})$, $C_{b}(\mathbb{S})$, $C_{c}(\mathbb{S})$, the spaces of real,
% bounded Borel measurable functions, continuous functions, continuous bounded
% functions and continuous functions with compact support, respectively. 
The
Borel sigma-field on $\mathbb{S}$ will be denoted as $\mathcal{B}(\mathbb{S}%
) $. 
% For an $\mathbb{S}$-valued measurable map $X$ defined on some
% probability space $(\Omega ,F,P)$ we will denote the measure induced by $X$
% on $(\mathbb{S},\mathcal{B}(\mathbb{S}))$ by $P\circ X^{-1}$.
 Given $\mathbb{S}$-valued random variables $X_{n},X$, we will write $X_{n}\Rightarrow X$ \
 to denote the weak convergence of $P\circ X_{n}^{-1}$ to $P\circ X^{-1}$.
For a real bounded measurable map $h$ on a measurable space $(V,\mathcal{V)}$%
, we denote $\sup_{v\in V}|h(v)|$ by $\left\Vert h\right\Vert _{\infty }$.
% The space of all probability measures on $(V,\mathcal{V)}$ is denoted as $%
% \mathit{P}(V,\mathcal{V)}$ or merely as $\mathit{P}(V),$ when clear from the
% context.

For a locally compact Polish space $\mathbb{S}$, we denote by $\mathcal{M}%
_{F}(\mathbb{S})$ the space of all measures $\nu $ on $(\mathbb{S},\mathcal{B%
}(\mathbb{S}))$, satisfying $\nu (K)<\infty $ for every compact $K\subset
\mathbb{S}$. We endow $\mathcal{M}_{F}(\mathbb{S})$ with the weakest
topology such that for every $f\in C_{c}(\mathbb{S})$ the function $\nu
\mapsto \left\langle f,\nu \right\rangle =\int_{\mathbb{S}}f(u)\,\nu
(du),\nu \in \mathcal{M}_{F}(\mathbb{S})$ is a continuous function. This
topology can be metrized such that $\mathcal{M}_{F}(\mathbb{S})$ is a Polish
space. One metric that is convenient for this purpose is the following.
Consider a sequence of open sets $\left\{ O_{j},j\in \mathbb{N}\right\} $
such that $\bar{O}_{j}\subset O_{j+1}$, each $\bar{O}_{j}$ is compact, and $%
\cup _{j=1}^{\infty }O_{j}=\mathbb{S}$ (cf. Theorem 9.5.21 of \cite{Roy}).
Let $\phi _{j}(x)=\left[ 1-d(x,O_{j})\right] \vee 0$, where $d$ denotes the
metric on $\mathbb{S}$. Given any $\mu \in \mathcal{M}_{F}(\mathbb{S})$, let
$\mu ^{j}\in \mathcal{M}_{F}(\mathbb{S})$ be defined by $\left[ d\mu
^{j}/d\mu \right] (x)=\phi _{j}(x)$. Given $\mu ,\nu \in \mathcal{M}_{F}(%
\mathbb{S})$, let
\begin{equation*}
\bar{d}(\mu ,\nu )=\sum_{j=1}^{\infty }2^{-j}\left\Vert \mu ^{j}-\nu
^{j}\right\Vert _{BL},
\end{equation*}%
where $\left\Vert \cdot \right\Vert _{BL}$ denotes the bounded, Lipschitz
norm:

\begin{equation*}
\left\Vert \mu ^{j}-\nu ^{j}\right\Vert _{BL}=\sup \left\{ \int_{\mathbb{S}%
}fd\mu ^{j}-\int_{\mathbb{S}}fd\nu ^{j}:\left\vert f\right\vert _{\infty
}\leq 1,\left\vert f(x)-f(y)\right\vert \leq d(x,y)\text{ for all }x,y\in
\mathbb{S}\right\} .
\end{equation*}

It is straightforward to check that $\bar{d}(\mu ,\nu )$ defines a metric
under which $\mathcal{M}_{F}(\mathbb{S})$ is a Polish space, and that
convergence in this metric is essentially equivalent to weak convergence on
each compact subset of $\mathbb{X}$. Specifically, $\bar{d}(\mu _{n},\mu
)\rightarrow 0$ if and only if for each $j\in \mathbb{N}$, $\mu
_{n}^{j}\rightarrow \mu ^{j}$ in the weak topology as finite nonnegative
measures, i.e., for all $f\in C_{b}(\mathbb{X})$%
\begin{equation*}
\int_{\mathbb{S}}fd\mu _{n}^{j}\rightarrow \int_{\mathbb{S}}fd\mu ^{j}.
\end{equation*}%
Throughout $\mathcal{B}(\mathcal{M}_{F}(\mathbb{S}))$ will denote the Borel
sigma-field on $\mathcal{M}_{F}(\mathbb{S})$, under this topology.

\section{Preliminaries}

\label{prelim}
In this section we recall some basic definitions and
the equivalence between a LDP and Laplace principle for a family of
probability measures on some Polish space. We next recall some
commonly used formulations for an infinite dimensional Brownian
motion, such as an infinite sequence of i.i.d.\ standard real Brownian motions, a Hilbert space valued Brownian motion, a cylindrical Brownian motion, and  a space-time Brownian sheet.  Relationships between these
various formulations are noted as well. At the end of the section the definition of a Poisson random measure is presented.

\subsection{Large Deviation Principle and Laplace Asymptotics.} Let $%
\{X^{\epsilon },\epsilon >0\} \equiv \{X^{\epsilon }\}$ be a family of random variables defined on a
probability space $(\Omega ,\mathcal{F},\mathbb{P})$ and taking values in a
Polish space $\mathcal{E}$. Denote
the metric on $\mathcal{E}$ by $d(x,y)$ and expectation with respect to $%
\mathbb{P}$ by $\mathbb{E}$. The theory of large deviations is
concerned with events $A$ whose probabilities
$\mathbb{P}(X^{\epsilon }\in A)$ converge to zero exponentially fast
as $\epsilon \rightarrow 0$. The exponential decay rate of such
probabilities is typically expressed in terms of a ``rate function''
$I$ mapping $\mathcal{E}$ into $[0,\infty ]$. If a sequence of random variables satisfies the large deviation principle
with some rate function, then the rate function is unique. In many problems one is interested in obtaining
exponential estimates on functions which are more general than
indicator functions of closed or open sets. This leads to the study
of the Laplace principle, which is tantamount to the LDP. The reader should refer to \cite{DuEl} for all the aforementioned definitions and equivalence between the LDP and the Laplace principle.

In view of this equivalence, the rest of this work will be
concerned with the study of the Laplace principle. In fact we will study a
somewhat strengthened notion, namely a \emph{Uniform Laplace Principle}, as
introduced below. The uniformity is critical in certain applications, such
as the study of exit time and invariant measure asymptotics for small noise
Markov processes \cite{FW1}.

Let $\mathcal{E}_{0}$ and $\mathcal{E}$ be Polish spaces. For each $\epsilon
>0$ and $y\in \mathcal{E}_{0}$ let $X^{\epsilon ,y}$ be $\mathcal{E}$ valued random
variables given on the probability space $(\Omega ,\mathcal{F}%
,\mathbb{P})$ .

\begin{definition}
\label{compact levels} A family of rate functions $I_y$ on $\EE$,
parameterized by $y \in \EE_0$, is said to have compact level sets
on
compacts if for all compact subsets $K$ of $\EE_0$ and each $M< \infty$, $%
\Lambda_{M,K} \doteq \cup_{y\in K}\{x \in \EE: I_y(x) \leq M\}$ is a compact
subset of $\EE$.
\end{definition}

\begin{definition}
\label{defn uniform laplace principle} (Uniform Laplace Principle) Let $%
I_{y} $ be a family of rate functions on $\EE$ parameterized by $y$ in $\EE%
_{0}$ and assume that this family has compact level sets on
compacts. The family $\{X^{\epsilon ,y}\}$ is said to satisfy the
Laplace principle on $\EE $ with rate function $I_{y}$, uniformly on
compacts, if for all compact subsets $K$ of $\mathcal{E}_0$ and all
bounded continuous functions $h$ mapping $\EE$ into $\mathbb{R}$,
\begin{equation*}
\lim_{\epsilon \rightarrow 0}\sup_{y\in K}\left\vert \epsilon \log \mathbb{E}%
_{y}\left\{ \exp \left[ -\frac{1}{\epsilon }h(X^{\epsilon ,y})\right]
\right\} +\inf_{x\in \EE}\Big{\{}h(x)+I_{y}(x)\Big{\}}\right\vert =0.
\end{equation*}
\end{definition}

\subsection{Infinite Dimensional Brownian Motions and Poisson random measure.}
This section revisits basic definitions for infinite dimensional Brownian motions and a Poisson random
measure. We first start with a definition of a Poisson random measure.

\begin{definition} \label{PRM}
Let $(K, \mathcal{K}, \mu)$ be some measure space with
$\sigma$-finite measure $\mu$. The Poisson random measure with
intensity measure $\mu$ is a family of random variables $\{N(A),
A\in\mathcal{K}\}$ defined on some probability space $(\Omega,
\mathcal{F},\mathbb{P})$ such that

\begin{enumerate}

\item $\forall \omega \in \Omega, \;N(\cdot,\omega)$ is a measure
on $(K, \mathcal{K})$.

 \item $\forall A \in \mathcal{K}, \;N(A)$ is
a Poisson random variable with rate $\mu(A)$, i.e.
$\mathbb{P}(N(A)=n)=  \frac{e^{-\mu(A)}}{n!} \mu(A)^n$.

\item If $A_1,A_2,\ldots,A_n \in \mathcal{K}$ disjoint, then
$N(A_1), N(A_2), \cdots, N(A_n)$ are mutually independent.
\end{enumerate}

\end{definition}

The rest of this section deals with all the equivalent types of an infinite dimensional nature of the
Brownian motion, for example depending on the application, an infinite sequence of i.i.d. standard (1--dim)
Brownian motions, a Hilbert space valued Brownian motion, a
cylindrical Brownian motion, and a space-time Brownian sheet. The reader should refer to \cite{BDM1} and references therein for an explanation how these infinite dimensional Brownian motions are related to each other.

Let $(\Omega,\mathcal{F},\mathbb{P})$ be a probability space with
an increasing family of right continuous $\mathbb{P}$--complete
sigma fields $\{\mathcal{F}_t\}_{t \geq 0}$. We will refer to
$(\Omega, \mathcal{F}, \mathbb{P}, \{\mathcal{F}_t\})$ as a
filtered probability space. Let $\{\beta_i\}_{i=1}^\infty$ be an
infinite sequence of independent, standard, one dimensional,
$\{\mathcal{F}_t\}$--Brownian motions given on this filtered
probability space. We will frequently consider all our stochastic
processes defined on a finite time interval $[0,T]$, where $T \in
(0, \infty)$ is a fixed arbitrary terminal time. We denote by
$\mathbb{R}^\infty$, the product space of countably infinite
copies of the real line. Then $\beta= \{\beta_i\}_{i=1}^\infty$ is
a random variable with values in the Polish space
$C([0,T]: \mathbb{R}^\infty)$ and represents the
simplest model for an infinite dimensional Brownian motion.

Frequently in applications it is convenient to express the Brownian
noise, analogous to finite dimensional theory, as a Hilbert space
valued stochastic processes. Let $(H,\langle \cdot, \cdot \rangle)$ be a real
separable Hilbert space. Let $Q$ be a bounded, strictly positive,
trace class operator on $H$.
\begin{definition} \label{defn Q-Wiener process}
An $H$--valued stochastic process $\{W(t), t \geq 0 \}$, given on
a filtered probability space $(\Omega,\mathcal{F},\mathbb{P},\{\mathcal{F}_t)\}$ is
called a $Q$--Wiener process with respect to $\{\mathcal{F}_t\}$
if for every non--zero $h \in H$,
\begin{displaymath}
         \bigl \{ \langle Qh, h \rangle^{-\frac{1}{2}} \langle W(t),h \rangle,\;\{\mathcal{F}_t\}\bigr \}_{ t \geq 0}
\end{displaymath}
is a one--dimensional standard Wiener process.
\end{definition}

\begin{remark}
\label{rem1107} Consider the Hilbert space $l_2 \Df \{x \equiv (x_1, x_2,
\cdots) : x_i \in \R\, \mbox{and}\, \sum x_i^2 < \infty\}$ with the inner
product $\inn{x}{y} \Df \sum x_iy_i$. Let $\{\lambda_i\}_{i=1}^{\infty}$ be
a sequence of strictly positive numbers such that $\sum \lambda_i < \infty$.
Then the Hilbert space $\bar l_2 \Df \{x \equiv (x_1, x_2, \cdots) :x_i \in \R%
\, \mbox{and}\, \sum \lambda_i x_i^2 < \infty\}$ with the inner product $%
\inn{x}{y}_1 \Df \sum \lambda_ix_iy_i$ contains $l_2$ and the
embedding map is Hilbert-Schmidt. Furthermore, the infinite sequence
of real Brownian motions $\beta$ takes values in $\bar l_2$ almost
surely and can be regarded as a $\bar l_2$ valued $Q$--Wiener
process with $\inn{Qx}{y}_1 = \sum_{i=1}^{\infty} \lambda_i^2 x_iy_i
$.
\end{remark}

The trace class operator $Q$ may be interpreted that it injects a ``coloring'' to
a white noise, namely an independent sequence of standard Brownian
motions, in a manner such that the resulting process has better
regularity. In some models of interest, such coloring is obtained
indirectly in terms of (state dependent) diffusion coefficients.
It is natural, in such situations to consider the driving noise as
a ``cylindrical Brownian motion" rather than a Hilbert space
valued Brownian motion. Let $(H, \langle \cdot, \cdot \rangle)$ be a Hilbert space valued Brownian motion and denote the norm on $H$ by $||\cdot||$. Fix a
filtered probability space $(\Omega, \mathcal{F}, \mathbb{P}, \{ \mathcal{F}_t\})$
\begin{definition}\label{defn cbm}
        A family $\{B_t(h) \equiv B(t,h): t \geq 0, h \in H\}$ of real random
        variables is said to be an $\{\mathcal{F}_t\}$--cylindrical Brownian motion
        if:
        \begin{enumerate}
            \item For every $h \in H \mbox{with}\; \;
            ||h||=1$,
            $\{B(t,h), \mathcal{F}_t\}_{t \geq 0 }$ is a standard
            Wiener process.
            \item For every $t \geq 0,\; a_1, a_2 \in\mathbb{R}$ and $f_1,f_2 \in H$,
            \begin{displaymath}
            B(t,a_1f_1+a_2f_2)=a_1B(t,f_1)+a_2B(t,f_2) \; \; a.s.
            \end{displaymath}
            \end{enumerate}
            \end{definition}

In many physical dynamical systems with randomness, the Brownian
noise is given as a space--time white noise process, also referred
to as a Brownian sheet. Let fix a bounded open subset $\mathcal{O}
\subseteq \mathbb{R}^d$.
\begin{definition}\label{defn BS}
A Gaussian family of real--valued random variables $\bigl\{
B(t,x),\; (t,x) \in \mathbb{R}_+ \times \mathcal{O} \bigr\}$ on
the above filtered probability space is called a Brownian sheet if
\begin{enumerate}
\item $\mathbb{E} B(t,x)=0, \; \forall (t,x) \in \mathbb{R}_+
\times \mathcal{O}$ \item $B(t,x)-B(s,x)$ is independent of
$\{\mathcal{F}_s\},\; \forall \; 0 \leq s \leq t$ and $x \in
\mathcal{O}$ \item $Cov \bigl( B(t,x), B(s,y)\bigr)= \lambda
(A_{t,x} \cap A_{s,y})$, where $\lambda$ is the Lebesgue measure
on $\mathbb{R}_+ \times \mathcal{O}$ and $A_{t,x} \doteq \bigl\{
(s,y)\in \mathbb{R}_+ \times \mathcal{O} \big| \; 0 \leq s \leq t
\; \; and \;\; y_j \leq x_j\; j=1, \cdots, d\bigl\}$. \item The
map $(t,u)\mapsto B(t,u)$ from $[0, \infty) \times \mathcal{O}$ to
$\mathbb{R}$ is continuous a.s.
\end{enumerate}
\end{definition}

\section{Variational Representations for functionals of Poisson Random Measure and Brownian motions.}
\label{varrep}

In this section we state the representation for functionals of
both a PRM and infinite dimensional Brownian motions.

Fix $T\in
(0,\infty )$. Let $\mathbb{X}$ be a locally compact Polish space and
$\mathbb{X}_{T}=[0,T]\times \mathbb{X}.$ Fix
a measure $\nu \in \mathcal{M}_{F}(\mathbb{X)}$ and let $\nu
_{T}=\lambda _{T}\otimes \nu $,
where $\lambda _{T}$ is the Lebesgue measure on $[0,T]$. Let $\mathbb{M}=%
\mathcal{M}_{F}(\mathbb{X}_{T}\mathbb{)}$ and denote by
$\mathbb{P}$ the
unique probability measure on $(\mathbb{\mathbb{M}},\mathcal{B(\mathbb{%
\mathbb{M}\mathcal{))}}}$ under which the canonical map, $N:\mathbb{\mathbb{M%
}}\rightarrow \mathbb{M},N(m)\doteq m,$ is a Poisson random
measure with intensity measure $\nu _{T}$. The corresponding
expectation operator will be denoted by $\mathbb{E}$.

Furthermore, denote the product space of
countable infinite copies of the real line by $\mathbb{R}^{\infty
}$. Endowed with the topology of coordinate-wise convergence
$\mathbb{R}^{\infty }$ is a Polish
space. Also let write the Polish space $C([0,T]:\mathbb{R}^{\infty })$ as $%
\mathbb{W}$ and consider the product space $\mathbb{V} \doteq
\mathbb{W\times M}$. Abusing the above notation, let
$N:\mathbb{V\rightarrow M}$ be defined by $N(w,m)=m$, and for the
coordinate maps, $\beta =\{\beta _{i}\}_{i=1}^{\infty }$, on
$\mathbb{V}$ let $\beta _{i}(w,m)=w_{i}$, for any $(w,m)\in
\mathbb{V}$.
% The map
% $\bar{N}:\mathbb{\bar{V}\rightarrow \bar{M}}$ is defined
% analogously.
% Analogous maps on $\mathbb{\bar{V}}$ are denoted
% again as $\beta =(\beta _{i})_{i=1}^{\infty }$.
Define,
\begin{equation}
\mathcal{G}_{t}\doteq \sigma \left\{
N((0,s]\times A),\beta _{i}(s):0\leq s\leq t,A\in \mathcal{B(\mathbb{X)}}%
,i\geq 1\right\} .
\end{equation}
With applications to large deviations in mind,
for $\theta >0$, denote by $\mathbb{P}_{\theta
}$ the unique probability measure on
$(\mathbb{V},\mathcal{B}(\mathbb{V))}$ such that under
$\mathbb{P}_{\theta }$:

\begin{enumerate}
\item $\{\beta _{i}\}_{i=1}^{\infty }$ is an i.i.d. family of
standard Brownian motions.

\item $N$ is a PRM with intensity measure $\theta \nu _{T}$.

\item $\left\{ \beta _{i}(t),t\in \lbrack 0,T]\right\} $, $\left\{
N([0,t]\times A),t\in \lbrack 0,T]\right\} $ are $\mathcal{G}_{t}$%
-martingales for every $i\geq 1$, $A\in \mathcal{B}(\mathbb{X})$.
\end{enumerate}

Let $\mathbb{Y}=\mathbb{X}\times \lbrack 0,\infty )$ and $\mathbb{Y}%
_{T}=[0,T]\times \mathbb{Y}$. Let $\mathbb{\bar{M}}=\mathcal{M}_{F}(\mathbb{Y%
}_{T}\mathbb{)}$ and let $\mathbb{\bar{P}}$ be the unique
probability
measure on $(\mathbb{\bar{M}},\mathcal{B(}\mathbb{\bar{M}}\mathcal{\mathbb{%
\mathcal{))}}}$ such that the canonical map, $\bar{N}:\mathbb{\bar{M}}%
\rightarrow \mathbb{\bar{M}},\bar{N}(m)\doteq m,$ is a Poisson
random measure with intensity measure $\bar{\nu}_{T}=\lambda
_{T}\otimes \nu \otimes \lambda _{\infty },$ where $\lambda
_{\infty }$ is Lebesgue measure
on $[0,\infty )$. The corresponding expectation operator will be denoted by $%
\bar{\mathbb{E}}$.

Analogously, let define $\bar{\mathbb{V}} \doteq \mathbb{W} \times
\bar{\mathbb{M}}$. Furthermore, abusing notation, let $\bar{N}:
\bar{\mathbb{V}} \rightarrow \bar{\mathbb{M}}$ be $N(w,\bar{m}) =
\bar{m}$ and for the coordinate maps on $\bar{\mathbb{V}}$ let be
denoted again as $\beta =\{\beta _{i}\}_{i=1}^{\infty }$. The
control will act through this additional component of the
underlying point space. Let $\mathcal{G}_{t}\doteq \sigma \left\{
\bar{N}((0,s]\times A),\beta _{i}(s):0\leq s\leq t,A\in \mathcal{B(\mathbb{Y)}}%
,i\geq 1\right\}$, and to
facilitate the use of a martingale representation theorem let $\mathcal{\bar{%
F}}_{t}$ denote the completion under $\mathbb{\bar{P}}$. We denote by $%
\mathcal{\bar{P}}$ the predictable $\sigma $-field on $[0,T]\times
\mathbb{\bar{V}}$ with the filtration $\left\{
\mathcal{\bar{F}}_{t}:0\leq t\leq
T\right\} $ on $(\mathbb{\bar{V}},\mathcal{B(}\mathbb{\bar{V}}\mathcal{%
\mathbb{\mathcal{))}}}$. Let $\bar{\mathcal{A}}$ be the class of all $(%
\mathcal{\bar{P}\mathbb{\otimes
}\mathcal{B(}\mathbb{X)})}\backslash \mathcal{B}[0,\infty )$
measurable maps $\varphi :\mathbb{X}_{T}\times
\mathbb{\bar{V}}\rightarrow \lbrack 0,\infty )$.\ For $\varphi \in \bar{%
\mathcal{A}}$, define a counting process $N^{\varphi }$ on
$\mathbb{X}_{T}$ by
\begin{equation}
N^{\varphi }((0,t]\mathbb{\times }U)=\int_{(0,t]\mathbb{\times }%
U}\int_{(0,\infty )}1_{[0,\varphi
(s,x)]}(r)\bar{N}(ds\,dx\,dr),\;t\in \lbrack 0,T],U\in
\mathcal{B(}\mathbb{X)}.  \label{ins903}
\end{equation}%
$N^{\varphi }$ is to be thought of as a controlled random measure, with $%
\varphi $ selecting the intensity for the points at location $x$ and time $s$%
, in a possibly random but nonanticipating way. Obviously
$N^{\theta }$ has the same distribution on $\mathbb{\bar{V}}$ with
respect to $\mathbb{\bar{P}}
$ as $N$ has on $\mathbb{V}$ with respect to $\mathbb{P}_{\theta }$. $%
N^{\theta }$ therefore plays the role of $N$ on $\mathbb{\bar{V}}$. Define $%
\ell :[0,\infty )\rightarrow \lbrack 0,\infty )$ by
\begin{equation*}
\ell (r)=r\log r-r+1,\;r\in \lbrack 0,\infty ).
\end{equation*}%
For any $\varphi \in \bar{\mathcal{A}}$ the quantity
\begin{equation}
L_{T}(\varphi )=\int_{\mathbb{X}_{T}}\ell (\varphi (t,x,\omega
))\,\nu _{T}(dt\,dx)  \label{cost}
\end{equation}%
is well defined as a $[0,\infty ]-$valued random variable.

Consider the $\ell _{2}$ Hilbert space as defined in Remark \ref{rem1107} and denote%
\begin{equation} \label{little l}
\mathcal{P}_{2}=\left\{ \psi =\{\psi _{i}\}_{i=1}^{\infty }:\psi
_{i}\text{ is
}\mathcal{\bar{P}}\backslash \mathcal{B}(\mathbb{R})\text{ measurable and }%
\int_{0}^{T}||\psi (s)||^{2}ds<\infty \text{, a.s.
}\mathbb{\bar{P}}\right\}
\end{equation}%
and set $\mathcal{U}=\mathcal{P}_{2}\mathcal{\times \bar{A}}$. For
$\psi \in \mathcal{P}_{2}$ define $\tilde{L}_{T}(\psi
)=\frac{1}{2}\int_{0}^{T}||\psi
(s)||^{2}ds$ and for $u=(\psi ,\varphi )\in \mathcal{U}$, set $\bar{L}%
_{T}(u)=L_{T}(\varphi )+$ $\tilde{L}_{T}(\psi )$. \ For $\psi \in \mathcal{P}%
_{2}$, let $\beta ^{\psi }=(\beta _{i}^{\psi })$ be defined as
$\beta _{i}^{\psi }(t)=\beta _{i}(t)+\int_{0}^{t}\psi _{i}(s)ds$,
$t\in \lbrack 0,T] $, $i\in \mathbb{N}$. The following variational 
representation theorem was established in \cite{BDM3}.

\begin{theorem} \label{repn203bm}
Let $F\in M_{b}(\mathbb{\mathbb{V}})$. Then for $\theta >0$,
\begin{equation*}
-\log \mathbb{E}_{\theta }\mathbb{(}e^{-F(\beta ,N)})=-\log \bar{\mathbb{E}}%
\mathbb{(}e^{-F(\beta ,N^{\theta })})=\inf_{u=(\psi ,\varphi )\in \mathcal{U}%
}\bar{\mathbb{E}}\left[ \theta \bar{L}_{T}(u)+F(\beta
^{\sqrt{\theta }\psi },N^{\theta \varphi })\right] .
\end{equation*}
\end{theorem}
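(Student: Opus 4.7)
The plan is to interpret the statement as a translation of the Donsker--Varadhan duality into ``control-process'' language via exponential changes of measure. Throughout, the doubled space $\bar{\mathbb V}$ is useful precisely because it allows controls of the intensity of $N$ via thinning of the auxiliary Poisson measure $\bar N$.

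First I would dispose of the equality $-\log\mathbb E_\theta(e^{-F(\beta,N)})=-\log\bar{\mathbb E}(e^{-F(\beta,N^\theta)})$. Taking $\varphi\equiv 1$ in \eqref{ins903}, $N^\theta((0,t]\times U)=\int_{(0,t]\times U\times[0,\infty)}\mathbf 1_{[0,\theta]}(r)\,\bar N(ds\,dx\,dr)$, so by the mapping theorem for Poisson random measures $N^\theta$ is a PRM on $\mathbb X_T$ with intensity $\theta\nu_T$ under $\bar{\mathbb P}$; independence of $\beta$ is preserved. Thus $(\beta,N^\theta)$ on $\bar{\mathbb V}$ has exactly the law of $(\beta,N)$ on $\mathbb V$ under $\mathbb P_\theta$.

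For the variational equality the strategy is to exhibit a bijection $u\leftrightarrow Q_u$ between controls $u=(\psi,\varphi)\in\mathcal U$ and equivalent probability measures $Q_u\sim\bar{\mathbb P}$ such that (a) under $Q_u$, $\beta^{\sqrt\theta\psi}$ is a standard Brownian motion and $N^{\theta\varphi}$ is a PRM with intensity $\theta\nu_T$ independent of it, and (b) the Radon--Nikodym density can be written as a stochastic exponential whose logarithm has $\bar{\mathbb P}$-predictable quadratic/jump components yielding $\bar{\mathbb E}[\theta\bar L_T(u)]$ as the relative entropy. Concretely $dQ_u/d\bar{\mathbb P}=\mathcal E^B_T(\psi)\cdot\mathcal E^N_T(\varphi)$, where $\mathcal E^B_T(\psi)=\exp(-\sqrt\theta\sum_i\int_0^T\psi_i\,d\beta_i-\tfrac{\theta}{2}\int_0^T\|\psi\|^2\,ds)$ is the Girsanov density and $\mathcal E^N_T(\varphi)$ is the Doléans exponential built from the intensity multiplier $\varphi(t,x)\mathbf 1_{r\le\theta}+\mathbf 1_{r>\theta}$ of $\bar N$. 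Once this correspondence is set up, the variational formula reduces to the Donsker--Varadhan identity $-\log\bar{\mathbb E}[e^{-G}]=\inf_{Q\ll\bar{\mathbb P}}\{E_Q[G]+R(Q\,\|\,\bar{\mathbb P})\}$ applied to $G=F(\beta,N^\theta)$: the upper bound $\le$ comes from plugging in $Q=Q_u$ and observing that under $Q_u$ the pair $(\beta,N^\theta)$ has the same law as $(\beta^{\sqrt\theta\psi},N^{\theta\varphi})$ under $\bar{\mathbb P}$; the lower bound $\ge$ uses the predictable representation theorem for the Brownian/PRM filtration to show that any $Q\ll\bar{\mathbb P}$ with finite entropy arises as some $Q_u$, by reading $\psi$ off the Brownian integrand and $\varphi$ off the Poisson multiplier in the martingale representation of $dQ/d\bar{\mathbb P}$.

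The hardest part will be the lower bound, specifically the Poisson component: establishing that an arbitrary $Q\ll\bar{\mathbb P}$ with $R(Q\|\bar{\mathbb P})<\infty$ admits a nonnegative predictable multiplier $\varphi$ with $\bar{\mathbb E}[\int\ell(\varphi)\,d\nu_T]$ finite, and that the exponential thus constructed reproduces $dQ/d\bar{\mathbb P}$ (as opposed to merely a uniformly integrable martingale bounding it). Two technical points need care: the equality $R(Q_u\,\|\,\bar{\mathbb P})=\bar{\mathbb E}[\theta\bar L_T(u)]$ requires a change of variables between $Q_u$- and $\bar{\mathbb P}$-expectations (using that $(\varphi,\psi)$ are predictable and that the compensator of $\bar N$ under $Q_u$ is exactly $\varphi\,d\bar\nu_T$ on $\{r\le\theta\}$); and the extension from bounded, nicely truncated controls to the full class $\mathcal U$ demands a two-sided approximation argument (truncating $\varphi\in[1/k,k]$, cutting off $\psi$, and passing to the limit in both directions of the inequality). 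Finally, merely bounded measurable $F$ is handled by monotone/continuous approximation and the stability of both sides of the identity, since no continuity of $F$ is required for the Donsker--Varadhan duality itself.
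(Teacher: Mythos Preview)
The paper does not supply a proof of this theorem: immediately before the statement the author writes ``The following variational representation theorem was established in \cite{BDM3}'' and simply quotes the result. So there is no in-paper argument against which to compare your proposal.

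That said, your sketch is in line with the strategy of the cited source (Budhiraja--Dupuis--Maroulas, \emph{Variational Representations for Continuous Time Processes}). The first equality is indeed the mapping theorem for Poisson random measures applied to the thinning $r\mapsto\mathbf 1_{[0,\theta]}(r)$, and the variational identity is obtained there by combining the Donsker--Varadhan entropy duality with Girsanov for the Brownian component and an exponential change of measure for the Poisson component, together with the predictable representation property of the $(\beta,\bar N)$-filtration to recover the lower bound. Two places in your outline deserve tightening. First, your assertion that ``under $Q_u$ the pair $(\beta,N^\theta)$ has the same law as $(\beta^{\sqrt\theta\psi},N^{\theta\varphi})$ under $\bar{\mathbb P}$'' is not well-posed as stated, since $\psi,\varphi$ are random; the correct formulation is that under $Q_u$ the processes $\beta^{\sqrt\theta\psi}$ and $N^{\theta\varphi}$ are themselves a standard Brownian motion and a PRM of intensity $\theta\nu_T$, so that $E_{Q_u}[G(\beta^{\sqrt\theta\psi},N^{\theta\varphi})]=\bar{\mathbb E}[G(\beta,N^{\theta})]$ for bounded measurable $G$, and the entropy $R(Q_u\Vert\bar{\mathbb P})$ is computed as $E_{Q_u}[\theta\bar L_T(u)]$, which then has to be matched with $\bar{\mathbb E}[\theta\bar L_T(u)]$ using the transformation rule for predictable integrands. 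Second, the Poisson density you write should act on $\bar N$ (on $\mathbb Y_T$) rather than on $N^\theta$ directly, and the multiplier producing the desired shift of $N^{\theta}$ to $N^{\theta\varphi}$ must be set up so that after the measure change the compensator of $\bar N$ on the strip $\{r\le\theta\varphi\}$ equals $\theta\nu_T$; this is slightly different from the density you wrote. None of this changes the architecture of the argument, which is the one in \cite{BDM3}.
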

As mentioned in the Introduction, depending on the application the infinite nature of the Brownian noise may be written in several other equivalent forms. First, let establish an analogous variational representation for a
functional of Hilbert space valued Brownian motion and a Poisson
random measure. Let $(H,\langle {\cdot },{\cdot }\rangle )$ be a
Hilbert space and let $W$ be an $H$ valued $Q-$Wiener process,
where $Q$ is a bounded, strictly positive, trace class operator on
the Hilbert space $H$. Let $H_{0}=Q^{1/2}H$, then $H_{0}$ is a
Hilbert space with the inner product $\left\langle
h,k\right\rangle _{0}\doteq \left\langle
Q^{-1/2}h,Q^{-1/2}h\right\rangle ,\;h,k\in H_{0}$. Also the
embedding map $i:H_{0}\mapsto H$ is a Hilbert--Schmidt operator
and
$ii^{\ast }=Q$. \ Let $||\cdot ||_{0}$ denote the norm in the Hilbert space $%
H_{0}$.

Furthermore, denote the Polish space $C([0,T]:H)$ by
$\mathbb{W}(H)$ and denote by $\mathbb{V}(H)$ the product space
$\mathbb{W}(H) \times \mathbb{M}$, where $\mathbb{M}$ as defined in the begining of the current section. Let
$\mathbb{\bar{V}}(H)=\mathbb{W}(H) \times \mathbb{\bar{M}}$.
Abusing notation, let $N:\mathbb{V}(H) \rightarrow \mathbb{M}$ be
defined by $N(w,m)=m$, for $(w,m)\in \mathbb{V}(H)$. The map
$\bar{N}:\mathbb{\bar{V}}(H) \rightarrow \mathbb{\bar{M}}$ is
defined analogously. Let $W$ be defined on $\mathbb{V}(H)$ as
$W(w,m)=w(t)$. Analogous maps on $\mathbb{\bar{V}}(H)$ are denoted
again as $W$. Define $\mathcal{G}_{t}\doteq \sigma \left\{
N((0,s]\times A), W(s):0\leq s\leq t,A\in
\mathcal{B(\mathbb{X)}}\right\} $. For $\theta >0$, denote by
$\mathbb{P}_{\theta }$ the unique probability measure on
$(\mathbb{V}(H),\mathcal{B}(\mathbb{V}(H)))$ such that under
$\mathbb{P}_{\theta }$:

\begin{enumerate}
\item $W(t)$ is an $H-$valued $Q-$Wiener process.

\item $N$ is a PRM with intensity measure $\theta \nu _{T}$.

\item $\left\{ W(t),t\in \lbrack 0,T]\right\} $, $\left\{
N([0,t]\times A),t\in \lbrack 0,T]\right\} $ are $\mathcal{G}_{t}$%
-martingales for every $A\in \mathcal{B}(\mathbb{X})$.
\end{enumerate}

Define $(\mathbb{\bar{P}},\left\{ \mathcal{\bar{G}}_{t}\right\} )$
on $(\mathbb{\bar{V}}(H),\mathcal{B}(\mathbb{\bar{V}}(H)))$
analogous to $\left( \mathbb{P}_{\theta },\left\{
\mathcal{G}_{t}\right\} \right) $ by replacing $(N,\theta \nu
_{T})$ with $(\bar{N},\bar{\nu}_{T})$. Now, let consider the $\mathbb{\bar{P}}$-completion of the filtration $%
\left\{ \mathcal{\bar{G}}_{t}\right\} $ and denote it by $\left\{ \mathcal{%
\bar{F}}_{t}\right\} $. We denote by $\mathcal{\bar{P}}$ the predictable $%
\sigma-$field on $[0,T]\times \mathbb{\bar{V}}(H)$ with the filtration $%
\left\{ \mathcal{\bar{F}}_{t}:0\leq t\leq T\right\}$ on $(\mathbb{\bar{V}}(H),%
\mathcal{B}(\mathbb{\bar{V}}(H)))$. Let $\bar{%
\mathcal{A}}$ be the class of all $(\mathcal{\bar{P}\mathbb{\otimes }%
\mathcal{B(}\mathbb{X)})}\backslash \mathcal{B}[0,\infty )$ measurable maps $%
\varphi :\mathbb{X}_{T}\times \mathbb{\bar{V}}\rightarrow \lbrack
0,\infty )$. \ For $\varphi \in \bar{\mathcal{A}}$, define
$L_{T}(\varphi )$ and the counting process $N^{\varphi }$ on
$\mathbb{X}_{T}$ as in (\ref{cost}) and (\ref{ins903})
respectively.

Define%
\begin{equation} \label{p2h}
\mathcal{P}_{2} \equiv \mathcal{P}_{2}(H)=\left\{ \psi: \psi
\text{ is
}\mathcal{\bar{P}}\backslash \mathcal{B}(\mathbb{R})\text{ measurable and }%
\int_{0}^{T}||\psi (s)||_0^{2}ds<\infty \text{, a.s.
}\mathbb{\bar{P}}\right\}
\end{equation}%
and set $\mathcal{U}(H)=\mathcal{P}_{2}(H)\mathcal{\times \bar{A}}$. For
$\psi \in \mathcal{P}_{2}$ define $\tilde{L}_T \equiv
\tilde{L}_{T}^H(\psi )=\frac{1}{2}\int_{0}^{T}||\psi
(s)||_0^{2}ds$ and for $u=(\psi ,\varphi )\in \mathcal{U}$, set $\bar{L}%
_{T}(u)=L_{T}(\varphi )+$ $\tilde{L}_{T}(\psi )$. \ For $\psi \in \mathcal{P} 
_{2}$, let $W ^{\psi }$ be defined as $W^{\psi
}(t)=W(t)+\int_{0}^{t} \psi(s)ds$, $t\in \lbrack 0,T] $. The
following representation follows from Theorem \ref{repn203bm} and the Propostion 1 in \cite{BDM1}.

\begin{theorem}
\label{repn203Hbm} Let $F\in M_{b}(\mathbb{V}(H))$. Then for
$\theta >0$,
\begin{equation*}
-\log \mathbb{E}_{\theta }\mathbb{(}e^{-F(W ,N)})=-\log \bar{\mathbb{E}}%
\mathbb{(}e^{-F(W ,N^{\theta })})=\inf_{u=(\psi ,\varphi )\in \mathcal{U}%
}\bar{\mathbb{E}}\left[ \theta \bar{L}_{T}(u)+F(W ^{\sqrt{\theta
}\psi },N^{\theta \varphi })\right] .
\end{equation*}
\end{theorem}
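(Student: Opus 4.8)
The plan is to deduce this from Theorem \ref{repn203bm} by a change-of-variables argument that converts the Hilbert-space-valued $Q$-Wiener process $W$ into an infinite sequence of i.i.d.\ standard real Brownian motions, exactly as in Proposition~1 of \cite{BDM1}, while leaving the Poisson component untouched. First I would fix an orthonormal basis $\{e_i\}_{i=1}^\infty$ of $H_0 = Q^{1/2}H$; since the embedding $i:H_0\to H$ is Hilbert--Schmidt, the vectors $f_i \doteq i e_i = Q^{1/2}e_i$ (after rescaling by the eigenvalues of $Q$) yield the standard representation $W(t) = \sum_{i=1}^\infty \beta_i(t)\, f_i$, where $\beta_i(t) = \langle W(t), e_i\rangle_0$ are i.i.d.\ standard real $\{\mathcal G_t\}$-Brownian motions. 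Under $\mathbb P_\theta$ the pair $(\beta, N)$ on $\mathbb V(H)$ then has exactly the law of the canonical $(\beta, N)$ on $\mathbb V$ under the $\mathbb P_\theta$ of the first construction; denote the corresponding measurable bijection of path spaces by $\Psi: \mathbb V(H)\to\mathbb V$, so $W = \Psi^{-1}$-image of $\beta$ in the obvious sense.

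Next I would push the functional $F\in M_b(\mathbb V(H))$ through this identification: set $\widetilde F \doteq F\circ\Psi^{-1}\in M_b(\mathbb V)$, and apply Theorem \ref{repn203bm} to $\widetilde F$, obtaining
\begin{equation*}
-\log\mathbb E_\theta(e^{-F(W,N)}) = -\log\mathbb E_\theta(e^{-\widetilde F(\beta,N)}) = \inf_{u=(\psi,\varphi)\in\mathcal U}\bar{\mathbb E}\Big[\theta\bar L_T(u) + \widetilde F(\beta^{\sqrt\theta\psi}, N^{\theta\varphi})\Big].
\end{equation*}
The remaining work is to translate the shifted process $\beta^{\sqrt\theta\psi}$ back into a shifted $W^\psi$ and to match the cost terms. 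Given $\psi=\{\psi_i\}\in\mathcal P_2$ (the $\ell_2$-version), define an $H_0$-valued control by $v(s) = \sum_i \psi_i(s) e_i$; then $W^{v}(t) = W(t) + \int_0^t v(s)\,ds$ corresponds under $\Psi$ precisely to $\beta^\psi$, because $\langle \int_0^t v(s)ds, e_i\rangle_0 = \int_0^t\psi_i(s)ds$. Moreover $\|v(s)\|_0^2 = \sum_i|\psi_i(s)|^2 = \|\psi(s)\|^2$, so the quadratic cost $\tilde L_T(\psi)$ in the $\ell_2$ formulation equals $\tilde L_T^H(v)$ in the $H_0$ formulation, and this correspondence $\psi\leftrightarrow v$ is a bijection between $\mathcal P_2$ and $\mathcal P_2(H)$ that preserves predictability (measurability of $\psi_i = \langle v,e_i\rangle_0$ in one direction and of the series in the other). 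The Poisson control $\varphi$ and its cost $L_T(\varphi)$ are literally the same object in both settings, so $\bar L_T$ matches. Substituting, the infimum over $\mathcal U$ becomes an infimum over $\mathcal U(H)$ with integrand $\theta\bar L_T(u) + F(W^{\sqrt\theta\psi}, N^{\theta\varphi})$, which is the claimed formula. The middle equality $-\log\mathbb E_\theta(e^{-F(W,N)}) = -\log\bar{\mathbb E}(e^{-F(W,N^\theta)})$ follows from the already-noted fact that $N^\theta$ on $\bar{\mathbb V}(H)$ has the same law as $N$ on $\mathbb V(H)$ under $\mathbb P_\theta$, together with $W$ having identical law on both spaces.

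The main obstacle I anticipate is purely bookkeeping rather than conceptual: one must verify carefully that the map $\Psi$ is a measurable bijection intertwining the two filtrations $\{\mathcal G_t\}$ (so that predictable controls map to predictable controls in both directions), and that the series $\sum_i \psi_i(s)e_i$ converges in $H_0$ for a.e.\ $s$ precisely when $\int_0^T\|\psi(s)\|^2 ds<\infty$ a.s.\ — this is where the Hilbert--Schmidt property of $i:H_0\to H$ and the trace-class property of $Q$ are used to guarantee $W$ is genuinely $H$-valued and the representations are consistent. None of this requires new estimates beyond those implicit in Proposition~1 of \cite{BDM1}; the content of the theorem is really the observation that the variational representation is invariant under this linear-isometry reparametrization of the Brownian noise, with the Poisson structure carried along unchanged. \qed
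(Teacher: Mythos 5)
Your proof is correct and takes essentially the same route as the paper, which simply cites Theorem~\ref{repn203bm} together with Proposition~1 of \cite{BDM1}; your argument unpacks exactly that reduction (choose an orthonormal basis of $H_0$, build the measurable bijection $W\leftrightarrow\beta$, transport $F$ and the controls across it, match the $\ell_2$ and $H_0$ quadratic costs via $\lVert v(s)\rVert_0^2=\lVert\psi(s)\rVert^2$, and carry the Poisson component along unchanged). One small caveat: $\langle W(t),e_i\rangle_0$ should be read as $\langle W(t),Q^{-1}e_i\rangle_H$ (which is well defined since $Q^{-1}e_i\in H$), because $W(t)\notin H_0$ almost surely; with that interpretation your $\beta_i$ are indeed the desired i.i.d.\ standard real $\{\mathcal{G}_t\}$-Brownian motions and the rest of the bookkeeping goes through as you describe.
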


Finally, we provide the representation theorem for a Brownian
sheet acting together with a Poisson random measure.
Let denote the Polish space $C([0,T] \times \mathcal{O}:\mathbb{R})$ by $%
\mathbb{W}_{BS}$ and denote by $\mathbb{V}_{BS}$ the product space
$\mathbb{W}_{BS}\times \mathbb{M}. $ Let
$\mathbb{\bar{V}}_{BS}=\mathbb{W}_{BS}\times \mathbb{\bar{M}}$.
Abusing notation, let $N:\mathbb{V}_{BS}\rightarrow \mathbb{M}$ be defined by $N(w,m)=m$%
, for $(w,m)\in \mathbb{V}_{BS}$. The map
$\bar{N}:\mathbb{\bar{V}_{BS}\rightarrow \bar{M}}$ is defined
analogously. Let $B(w,m) = w(t,x)$ on $\mathbb{V}_{BS}$ and
analogously on $\mathbb{\bar{V}}_{BS}$ is denoted again as
$B(t,x)$. Define $\mathcal{G}_{t}\doteq \sigma \left\{
N((0,s]\times A),B(s,x):0\leq s\leq t, x \in \mathcal{O}, A\in \mathcal{B(\mathbb{X)}}%
,i\geq 1\right\} $. For $\theta >0$, denote by $\mathbb{P}_{\theta
}$ the unique probability measure on
$(\mathbb{V}_{BS},\mathcal{B}(\mathbb{V}_{BS}))$ such that under
$\mathbb{P}_{\theta }$:

\begin{enumerate}
\item $B$ is a Brownian sheet.

\item $N$ is a PRM with intensity measure $\theta \nu _{T}$.

\item $\left\{ B(t,x),t\in \lbrack 0,T]\right\} $, $\left\{
N([0,t]\times A),t\in \lbrack 0,T]\right\} $ are $\mathcal{G}_{t}$%
-martingales for every $i\geq 1$, $A\in \mathcal{B}(\mathbb{X})$.
\end{enumerate}

Define $(\mathbb{\bar{P}},\left\{ \mathcal{\bar{G}}_{t}\right\} )$ on $(%
\mathbb{\bar{V}}_{BS},\mathcal{B}(\mathbb{\bar{V}}_{BS}))$
analogous to $\left(
\mathbb{P}_{\theta },\left\{ \mathcal{G}_{t}\right\} \right)$ by replacing $%
(N,\theta \nu _{T})$ with $(\bar{N},\bar{\nu}_{T})$.
Define the $\mathbb{\bar{P}}-$completion of the filtration $%
\left\{ \mathcal{\bar{G}}_{t}\right\} $ and denote it by $\left\{ \mathcal{%
\bar{F}}_{t}\right\} $. We denote by $\mathcal{\bar{P}}$ the predictable $%
\sigma-$field on $[0,T]\times \mathcal{O} \times \mathbb{\bar{V}}_{BS}$ with the filtration $%
\left\{ \mathcal{\bar{F}}_{t}:0\leq t\leq T\right\} $ on $(\mathbb{\bar{V}}_{BS},%
\mathcal{B(}\mathbb{\bar{V}}_{BS}\mathcal{\mathbb{\mathcal{))}}}$. Let $\bar{%
\mathcal{A}}$ be the class of all $(\mathcal{\bar{P}\mathbb{\otimes }%
\mathcal{B(}\mathbb{X)})}\backslash \mathcal{B}[0,\infty )$ measurable maps $%
\varphi :\mathbb{X}_{T}\times \mathbb{\bar{V}_{BS}}\rightarrow \lbrack 0,\infty )$%
.\ For $\varphi \in \bar{\mathcal{A}}$, define $L_{T}(\varphi )$
and the counting process $N^{\varphi }$ on $\mathbb{X}_{T}$ as in
(\ref{cost}) and (\ref{ins903}) respectively.

Define
\begin{equation} \label{p2bs}
\mathcal{P}_{2} \equiv \mathcal{P}_{2}^{BS}=\left\{ \psi: \psi
\text{ is
}\mathcal{\bar{P}}\backslash \mathcal{B}(\mathbb{R})\text{ measurable and }%
\int_{0}^{T} \int_{\mathcal{O}}\psi^2(s,x)ds dx <\infty \text{,
a.s. }\mathbb{\bar{P}}\right\}
\end{equation}%
and set $\mathcal{U}^{BS}=\mathcal{P}_{2}^{BS}\mathcal{\times \bar{A}}$. For
$\psi \in \mathcal{P}_{2}$ define $\tilde{L}_{T} \equiv
\tilde{L}^{BS}_{T}(\psi )=\frac{1}{2}\int_{0}^{T}
\int_{\mathcal{O}} \psi
(s,r)^{2} dr ds$ and for $u=(\psi ,\varphi )\in \mathcal{U}$, set $\bar{L}%
_{T}(u) \equiv \bar{L}%
_{T}^{BS}(u)=L_{T}(\varphi )+$ $\tilde{L}_{T}(\psi )$. \ For $\psi \in \mathcal{P}%
_{2}$, let $B ^{\psi }$ be defined as $B^{\psi
}(t,x)=B(t,x)+\int_{0}^{t} \int_{\mathcal{O} \cap
(-\infty,x]}\psi^2(s,y) dy ds$, $t\in \lbrack 0,T] $, $i\in
\mathbb{N}$. We finally remark the following representation for a Brownian sheet and a Poisson random measure follows from Theorem \ref{repn203bm}, Proposition 3 in \cite{BDM1} and an application of Girsanov's Theorem. 

\begin{theorem}
\label{repn203BS} Let $F\in M_{b}(\mathbb{V}_{BS})$. Then for $\theta >0$%
,
\begin{equation*}
-\log \mathbb{E}_{\theta }\mathbb{(}e^{-F(B ,N)})=-\log \bar{\mathbb{E}}%
\mathbb{(}e^{-F(B ,N^{\theta })})=\inf_{u=(\psi ,\varphi )\in \mathcal{U}%
}\bar{\mathbb{E}}\left[ \theta \bar{L}_{T}(u)+F(B^{\sqrt{\theta
}\psi },N^{\theta \varphi })\right] .
\end{equation*}
\end{theorem}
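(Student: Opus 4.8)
The plan is to derive Theorem~\ref{repn203BS} from the already-established Brownian-sheet case of the variational representation for functionals of an infinite sequence of real Brownian motions together with a PRM, namely Theorem~\ref{repn203bm}, by transporting the representation along the measurable bijection that identifies a Brownian sheet with a suitable infinite sequence of i.i.d.\ real Brownian motions. Concretely, fix a complete orthonormal system $\{e_k\}_{k=1}^\infty$ of $L^2(\mathcal{O})$; given the infinite sequence $\beta=\{\beta_i\}_{i=1}^\infty$ one builds a Brownian sheet $B$ by the standard spectral/series construction $B(t,x)=\sum_{k=1}^\infty \big(\int_{\mathcal{O}\cap(-\infty,x]}e_k(y)\,dy\big)\beta_k(t)$, and conversely the increments of any Brownian sheet decompose along $\{e_k\}$ into an i.i.d.\ family of real Brownian motions. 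This is exactly the content of Proposition~3 in \cite{BDM1}: there is a measurable map $\Gamma:\mathbb{W}\to\mathbb{W}_{BS}$ (and a measurable inverse on the relevant full-measure set) intertwining the laws, and it is $\{\mathcal G_t\}$-adapted so that it respects the filtration used to define $\bar{\mathcal A}$ and $\mathcal P_2$. The first step is therefore to state this identification carefully, lift it to the product spaces $\mathbb{V}_{BS}=\mathbb{W}_{BS}\times\mathbb{M}$ and $\bar{\mathbb{V}}_{BS}=\mathbb{W}_{BS}\times\bar{\mathbb{M}}$ (acting as the identity on the $\mathbb{M}$, resp.\ $\bar{\mathbb{M}}$, coordinate, since the PRM part is untouched), and check that it maps $\mathbb{P}_\theta$ on $\mathbb{V}_{BS}$ to $\mathbb{P}_\theta$ on $\mathbb{V}$ and $\bar{\mathbb{P}}$ to $\bar{\mathbb{P}}$.

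Next I would reduce the claim to Theorem~\ref{repn203bm} applied to the functional $\widetilde F\doteq F\circ(\Gamma\otimes\mathrm{id}):\mathbb{V}\to\mathbb{R}$, which is bounded and measurable whenever $F\in M_b(\mathbb{V}_{BS})$. The left two equalities, $-\log\mathbb{E}_\theta(e^{-F(B,N)})=-\log\bar{\mathbb{E}}(e^{-F(B,N^\theta)})$, follow immediately because $N^\theta$ has the same law under $\bar{\mathbb{P}}$ that $N$ has under $\mathbb{P}_\theta$ and $B$ is untouched — this is the verbatim analogue of the first equality in Theorem~\ref{repn203bm} and needs only the change-of-variables by $\Gamma\otimes\mathrm{id}$. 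For the variational identity I would write Theorem~\ref{repn203bm} for $\widetilde F$,
\begin{equation*}
-\log\bar{\mathbb{E}}(e^{-\widetilde F(\beta,N^\theta)})=\inf_{(\psi,\varphi)\in\mathcal U}\bar{\mathbb{E}}\big[\theta\bar L_T(\psi,\varphi)+\widetilde F(\beta^{\sqrt\theta\psi},N^{\theta\varphi})\big],
\end{equation*}
and then transport both sides. The $\varphi$-component and the term $L_T(\varphi)$ involve only the PRM and are unchanged. For the Brownian part I must match the shifted sheet $B^{\sqrt\theta\psi}$, defined with the shift $\int_0^t\int_{\mathcal{O}\cap(-\infty,x]}\psi(s,y)\,dy\,ds$, against $\Gamma$ applied to the shifted sequence $\beta^{\sqrt\theta\psi'}$, where $\psi'=\{\psi'_k\}$ with $\psi'_k(s)=\langle\psi(s,\cdot),e_k\rangle_{L^2(\mathcal{O})}$; since $\Gamma$ is affine in the path argument and $\int_{\mathcal{O}\cap(-\infty,x]}e_k(y)\,dy$ are precisely its coefficients, one gets $\Gamma(\beta^{\sqrt\theta\psi'})=B^{\sqrt\theta\psi}+(\text{drift})$, and the Parseval identity $\int_0^T\|\psi'(s)\|_{\ell_2}^2\,ds=\int_0^T\int_{\mathcal{O}}\psi(s,x)^2\,dx\,ds$ gives $\tilde L_T(\psi')=\tilde L_T^{BS}(\psi)$, so the two cost functionals and the two shifted noises agree. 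This is where Girsanov's theorem enters, as flagged in the statement: it justifies that shifting the sheet by this particular (anticipating-free, square-integrable) drift is the image under $\Gamma$ of shifting the sequence, and that the map $\psi\mapsto\psi'$ is a bijection between $\mathcal P_2^{BS}$ and $\mathcal P_2$ preserving the relevant measurability; the infimum over $\mathcal U^{BS}$ then equals the infimum over $\mathcal U$.

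The main obstacle is the bookkeeping around the drift term in the definition of $B^\psi$ as written — the shift is an integral of $\psi$ (one appearance) over $\mathcal{O}\cap(-\infty,x]$, and one must verify that this is exactly the affine image under the spectral map $\Gamma$ of the mean-shift $\beta_i\mapsto\beta_i+\int_0^\cdot\psi_i$, with the $\ell_2$ cost matching the $L^2([0,T]\times\mathcal{O})$ cost by Parseval. (The statement's displayed formula writes $\psi^2$ inside that integral, which appears to be a typographical slip; the correct shift is linear in $\psi$, and I would state and use it in that form, as is standard and as is needed for Girsanov and Parseval to line up.) A secondary technical point is that $\Gamma$ is only defined, or only a bijection, off a $\bar{\mathbb{P}}$-null set, so one should work with a fixed full-measure invariant set and note that all the expectations and infima are insensitive to null sets; and one must confirm the adaptedness of $\Gamma$ so that the predictable $\sigma$-field $\bar{\mathcal P}$ on the sheet side corresponds to the one on the sequence side, making the classes $\bar{\mathcal A}$ genuinely identical after the identification. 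Once these points are in place, the three-way equality for $F$ on $\mathbb{V}_{BS}$ is just the three-way equality of Theorem~\ref{repn203bm} for $\widetilde F$ read through the change of variables, which completes the proof.
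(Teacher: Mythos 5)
Your proof is correct and follows exactly the route the paper indicates: the paper's own justification for Theorem~\ref{repn203BS} is the single-sentence remark that it follows from Theorem~\ref{repn203bm}, Proposition~3 in \cite{BDM1}, and Girsanov's theorem, and your argument is precisely the detailed realization of that sketch, down to the identification map $\Gamma$, the Parseval identity matching the two cost functionals, and the null-set/adaptedness bookkeeping. You also correctly flag the paper's typographical slip in the definition of $B^{\psi}$, where $\psi^{2}$ should read $\psi$ so that the shift is linear in the control, as required for Girsanov and Parseval to line up.
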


\section{Uniform Large Deviations Estimates.} \label{ldpgen}

This is the central section of this paper where the uniform Laplace
principle for functionals of a Poisson random measure and an
infinite dimensional Brownian motion of any type are verified. The uniformity
is with respect to a parameter $\zeta$ (typically an initial
condition), which takes values in some compact subset of a Polish
space $\mathcal{E}_{0}$. 

Let first consider the case of a Hilbert space valued Wiener process
and then use this case to deduce analogous Laplace principle results for
functionals of a cylindrical Brownian motion and a Brownian sheet acting independently together with a Poisson random measure.  Let $(\Omega ,\mathcal{F},\mathbb{P},\{%
\mathcal{F}_{t}\})$, $(H,\left\langle \cdot ,\cdot \right\rangle
)$, $Q$ be as in Section \ref{prelim} and let $W$ be an $H-$valued
Wiener process with trace class covariance $Q$ given on this
filtered probability space. Let $\mathcal{E}$ be a Polish space,
and for each $\epsilon >0$, let $\mathcal{G}^{\epsilon }:\mathcal{E}_{0}\times \mathbb{V}(H)%
\rightarrow \mathcal{E}$ be a measurable map. We next discuss a
set of sufficient conditions for a uniform large deviation
principle to hold for the family
\begin{equation} \label{fam1}
\bigl \{Z^{\epsilon ,\zeta}\doteq \mathcal{G}^{\epsilon
}(\zeta,\sqrt{\epsilon}W, \epsilon N^{\epsilon^{-1}}) \bigr \}
\mbox{ as } \epsilon \rightarrow 0.
\end{equation}
Let $H_{0}$ be as introduced before and define for $N\in
\mathbb{N}$
\begin{equation}
\tilde{S}^{N}(H_{0})\doteq \left\{ u\in
L^{2}([0,T]:H_{0}):\tilde{L}_T(u)\leq N\right\} . \label{sn}
\end{equation}%
%\begin{equation}
%\mathcal{P}_{2}^{N}(H_{0})\doteq \left\{ u\in \mathcal{P}_{2}(H_{0}):u(%
%\omega )\in S^{N}(H_{0}),\mathbb{P}-a.s.\right\} .  \label{alphan}
%\end{equation}%
% It is easy to check that $\tilde{S}^{N}(H_{0})$ is a compact
% metric space under the metric $d_{1}(x,y)=\sum_{i=1}^{\infty
% }\frac{1}{2^{i}}\left\vert \int_{0}^{T}\left\langle
% x(s)-y(s),e_{i}(s)\right\rangle _{0}ds\right\vert $. Henceforth,
% wherever we refer to
$\tilde{S}^{N}(H_{0})$, we will be
endowed with the topology obtained from the metric $d_{1}(x,y)=\sum_{i=1}^{\infty}\frac{1}{2^{i}}\left\vert \int_{0}^{T}\left\langle
x(s)-y(s),e_{i}(s)\right\rangle _{0}ds\right\vert$ and
refer to this as the weak topology on $S^{N}(H_{0})$.

Also, let%
\begin{equation} \label{twostars}
S^{N}=\left\{ g:X_{T}\rightarrow \lbrack 0,\infty ):L_{T}(g)\leq
N\right\} .
\end{equation}%
A function $g\in S^{N}$ can be identified with a measure $\nu
_{T}^{g}\in
\mathbb{M}$, defined by $\nu _{T}^{g}(A)=\int_{A}g(s,x)\,\nu _{T}(dsdx)$, $%
A\in \mathcal{B}(\mathbb{X}_{T})$. Recalling from the Introducton that convergence in
$\mathbb{M}$ is essentially equivalent to weak convergence on
compact subsets, the superlinear growth of
$\ell $ implies that $\left\{ \nu _{T}^{g}:g\in S^{N}\right\} $ is
a compact subset of $\mathbb{M}$. Throughout we consider the
topology on $S^{N}$ obtained through this
identification which makes $S^{N}$ a compact space. We let $\bar{S}^{N}=%
\tilde{S}^{N}(H_0)\times S^{N}$ with the usual product topology. Recall the product space $\mathcal{U} = \mathcal{P}_2(H) \times \bar{\mathcal{A}}$ and let $\mathbb{S}%
=\cup _{N\geq 1}\bar{S}^{N}$ and let $\mathcal{U}^{N}$ be the space of $\bar{%
S}^{N}$-valued controls:%
\begin{equation} \label{cspace}
\mathcal{U}^{N}=\left\{ u=(\psi ,\varphi )\in \mathcal{U}:u(\omega )\in \bar{%
S}^{N}\text{, }\mathbb{\bar{P}}\text{ a.e. }\omega \right\} .
\end{equation}%
%The following will be the main assumption in our large deviations
%result.

\begin{condition}
\label{maincond1}There exists a measurable map $\mathcal{G}^{0}:\mathcal{E}_0 \times \mathbb{V}(H)%
\rightarrow \mathcal{E}$ such that the following hold.

\begin{enumerate}
\item For $N\in \mathbb{N}$ let $(f_{n},g_{n})$, $(f,g)\in
\bar{S}^{N}$ be such that $(\zeta_n,f_{n},g_{n})\rightarrow
(\zeta,f,g)$. Then
\begin{equation*}
\mathcal{G}^{0}\left( \zeta_n,\int_{0}^{\cdot }f_{n}(s)ds,\nu
_{T}^{g_{n}}\right) \rightarrow \mathcal{G}^{0}\left( \zeta,
\int_{0}^{\cdot }f(s)ds,\nu _{T}^{g}\right) .
\end{equation*}

\item For $N\in \mathbb{N}$ let $u_{\epsilon }=(\psi _{\epsilon
},\varphi
_{\epsilon })$, $u=(\psi ,\varphi )\in \mathcal{U}^{N}$ be such that, as $%
\epsilon \rightarrow 0$, $u_{\epsilon }$ converges in distribution
to $u$ and $\{\zeta^\epsilon\} \subset \mathcal{E}_0,\;
\zeta^\epsilon \rightarrow \zeta, \mbox{ as } \epsilon \rightarrow
0$.
Then%
\begin{equation*}
\mathcal{G}^{\epsilon }\left( \zeta^\epsilon, \sqrt{\epsilon
}W(\cdot) +\int_{0}^{\cdot }\psi _{\epsilon }(s)ds,\,\epsilon
N^{\epsilon ^{-1}\varphi _{\epsilon }}\right) \Rightarrow
\mathcal{G}^{0}\left(\zeta, \int_{0}^{\cdot }\psi (s)ds,\nu
_{T}^{\varphi }\right) .
\end{equation*}
\end{enumerate}
\end{condition}

For $\phi \in \mathcal{E}$, define $\mathbb{S}_{\phi }=\left\{
(f,g)\in \mathbb{S}:\phi =\mathcal{G}^{0}(\zeta,\int_{0}^{\cdot
}f(s)ds,\nu _{T}^{g})\right\} $. Let
$I_\zeta:\mathcal{E}\rightarrow [0,\infty ]$ be defined by
\begin{equation} \label{rateH}
I_\zeta(\phi )=\inf_{q=(f,g)\in \mathbb{S}_{\phi }}\left\{
\bar{L}_{T}(q)\right\} \text{.}
\end{equation}%

\begin{theorem}
\label{uni-lap-hil} Let $Z^{\epsilon ,\zeta}$ be defined as in
(\ref{fam1}) and suppose that Condition \ref{maincond1} holds.
 Suppose that for all $f\in \mathcal{E},\;\zeta \mapsto
I_{\zeta}(f)$ is a lower semi-continuous (l.s.c.) map from
$\mathcal{E}_{0}$ to $[0,\infty ]$. Then,
for all $\zeta \in \mathcal{E}_{0},\;f \mapsto I_{\zeta}(f)$ is a rate function on $%
\mathcal{E}$ and the family $\{I_{\zeta}(\cdot ),\;\zeta\in
\mathcal{E}_{0}\}$ of
rate functions has compact level sets on compacts. Furthermore, the family $%
\{Z^{\epsilon ,\zeta}\}$ satisfies the Laplace principle on $%
\mathcal{E}$, with rate function $I_{\zeta}$, uniformly on compact subsets of $%
\mathcal{E}_{0}$.
\end{theorem}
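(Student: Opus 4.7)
The plan is to apply the variational representation of Theorem \ref{repn203Hbm} with $\theta = \epsilon^{-1}$ to the functional $F(w, m) := \epsilon^{-1} h(\mathcal{G}^\epsilon(\zeta, \sqrt{\epsilon}\,w, \epsilon m))$, which after multiplication by $\epsilon$ produces the key identity
\[
-\epsilon \log \mathbb{E}\bigl[e^{-h(Z^{\epsilon,\zeta})/\epsilon}\bigr] \,=\, \inf_{u=(\psi,\varphi) \in \mathcal{U}} \bar{\mathbb{E}}\bigl[\bar{L}_T(u) + h(\bar{Z}^{\epsilon,\zeta,u})\bigr],
\]
where $\bar{Z}^{\epsilon,\zeta,u} := \mathcal{G}^\epsilon(\zeta, \sqrt{\epsilon}\,W + \int_0^\cdot \psi(s)\,ds, \epsilon N^{\epsilon^{-1}\varphi})$. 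Since $K$ is compact, the uniform Laplace statement reduces via a standard subsequence argument to showing that, for every $h \in C_b(\mathcal{E})$ and every sequence $\zeta_\epsilon \to \zeta$ in $K$, the left-hand side converges to $\inf_x\{h(x) + I_\zeta(x)\}$. The compact-level-sets-on-compacts property is established first: given $f_n \in \Lambda_{M,K}$ with associated $\zeta_n \in K$, choose $(q_n, g_n) \in \mathbb{S}_{f_n}$ at parameter $\zeta_n$ with $\bar{L}_T(q_n, g_n) \leq M + 1/n$, extract a convergent subsequence from the compact set $K \times \bar{S}^{M+1}$, invoke Condition \ref{maincond1}(1) to pass to the limit $f_n \to f = \mathcal{G}^0(\zeta, \int_0^\cdot q(s)\,ds, \nu_T^g)$, and use lower semi-continuity of $\bar{L}_T$ to conclude $\bar{L}_T(q,g) \leq M$, hence $I_\zeta(f) \leq M$ and $f \in \Lambda_{M,K}$.

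For the Laplace upper bound, fix $\delta > 0$ and choose $x_\delta \in \mathcal{E}$ and $(q_\delta, g_\delta) \in \mathbb{S}_{x_\delta}$ (at parameter $\zeta$) satisfying $h(x_\delta) + \bar{L}_T(q_\delta, g_\delta) \leq \inf_x\{h(x) + I_\zeta(x)\} + \delta$. Plugging the deterministic control $(q_\delta, g_\delta)$ into the variational formula at $\zeta_\epsilon$ yields
\[
-\epsilon \log \mathbb{E}\bigl[e^{-h(Z^{\epsilon,\zeta_\epsilon})/\epsilon}\bigr] \leq \bar{L}_T(q_\delta, g_\delta) + \bar{\mathbb{E}}\bigl[h(\bar{Z}^{\epsilon,\zeta_\epsilon,(q_\delta,g_\delta)})\bigr].
\]
Applying Condition \ref{maincond1}(2) with the constant sequence $u_\epsilon \equiv (q_\delta, g_\delta)$ and $\zeta_\epsilon \to \zeta$ gives $\bar{Z}^{\epsilon,\zeta_\epsilon,(q_\delta,g_\delta)} \Rightarrow x_\delta$, and bounded continuity of $h$ gives $\bar{\mathbb{E}}[h(\cdot)] \to h(x_\delta)$. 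Hence $\limsup_\epsilon (-\epsilon \log \mathbb{E}[\cdot]) \leq \inf_x\{h(x) + I_\zeta(x)\} + \delta$, and $\delta \downarrow 0$ closes the upper bound.

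The Laplace lower bound is the technical crux. For each $\epsilon$, choose a random $u_\epsilon = (\psi_\epsilon, \varphi_\epsilon) \in \mathcal{U}$ that is $\epsilon$-suboptimal in the variational formula; boundedness $\|h\|_\infty \leq M_0$ then yields $\bar{\mathbb{E}}[\bar{L}_T(u_\epsilon)] \leq 2M_0 + \epsilon$, so $\bar{\mathbb{P}}(\bar{L}_T(u_\epsilon) > N) \leq (2M_0 + 1)/N$ by Chebyshev. Carry out the standard truncation: set $\tilde{u}_\epsilon = u_\epsilon$ on $\{\bar{L}_T(u_\epsilon) \leq N\}$ and $\tilde{u}_\epsilon = (0, 1)$ otherwise (the pair $(0,1)$ lies in every $\mathcal{U}^N$ because $\bar{L}_T(0,1) = 0$), which gives $\tilde{u}_\epsilon \in \mathcal{U}^N$ together with
\[
\bar{\mathbb{E}}\bigl[\bar{L}_T(\tilde{u}_\epsilon) + h(\bar{Z}^{\epsilon,\zeta_\epsilon,\tilde{u}_\epsilon})\bigr] \leq -\epsilon \log \mathbb{E}[\cdot] + \epsilon + \frac{2M_0(2M_0+1)}{N}.
\]
Compactness of $\bar{S}^N$ makes $\{\tilde{u}_\epsilon\}$ automatically tight; along a subsequence $\tilde{u}_\epsilon \Rightarrow u^* = (q^*, g^*)$ in $\mathcal{U}^N$. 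Condition \ref{maincond1}(2), combined with a Skorokhod realization, then yields the joint convergence $(\tilde{u}_\epsilon, \bar{Z}^{\epsilon,\zeta_\epsilon,\tilde{u}_\epsilon}) \Rightarrow (u^*, x^*)$ with $x^* := \mathcal{G}^0(\zeta, \int_0^\cdot q^*(s)\,ds, \nu_T^{g^*})$, so that $u^* \in \mathbb{S}_{x^*}$ at $\zeta$. Lower semi-continuity of $\bar{L}_T$, continuity of $h$, and Fatou give
\[
\liminf_\epsilon \bar{\mathbb{E}}\bigl[\bar{L}_T(\tilde{u}_\epsilon) + h(\bar{Z}^{\epsilon,\zeta_\epsilon,\tilde{u}_\epsilon})\bigr] \geq \bar{\mathbb{E}}[\bar{L}_T(u^*) + h(x^*)] \geq \bar{\mathbb{E}}[I_\zeta(x^*) + h(x^*)] \geq \inf_x\{h(x) + I_\zeta(x)\},
\]
and sending $N \to \infty$ completes the proof. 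The principal obstacle is precisely this combined truncation and weak-convergence step: modifying the near-optimal random controls so they lie in $\mathcal{U}^N$ while losing only $o(1)$ in the variational cost, and passing jointly to the limit in $(\tilde{u}_\epsilon, \zeta_\epsilon, \bar{Z}^{\epsilon,\zeta_\epsilon,\tilde{u}_\epsilon})$ through Condition \ref{maincond1}(2) with simultaneously varying $\zeta_\epsilon$, with the hypothesis on $\zeta \mapsto I_\zeta$ guaranteeing the requisite semi-continuity of the limiting cost.
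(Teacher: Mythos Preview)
Your overall strategy mirrors the paper's proof closely, and the compact-level-sets argument and upper bound are fine. However, there is a genuine gap in your truncation step for the lower bound.

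You propose to set $\tilde u_\epsilon = u_\epsilon$ on the event $\{\bar L_T(u_\epsilon)\le N\}$ and $\tilde u_\epsilon=(0,1)$ otherwise. The problem is that the event $\{\bar L_T(u_\epsilon)\le N\}$ is $\bar{\mathcal F}_T$-measurable but not $\bar{\mathcal F}_t$-measurable for $t<T$; multiplying $u_\epsilon$ by its indicator destroys predictability. Consequently $\tilde u_\epsilon$ fails to belong to $\mathcal U$ (the class $\bar{\mathcal A}$ demands $(\bar{\mathcal P}\otimes\mathcal B(\mathbb X))$-measurability, and $\mathcal P_2$ demands $\bar{\mathcal P}$-measurability), so the controlled measure $N^{\epsilon^{-1}\tilde\varphi_\epsilon}$ in \eqref{ins903} is not even defined, and you cannot invoke Condition~\ref{maincond1}(2) or the variational formula for $\tilde u_\epsilon$.

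The paper fixes this with a \emph{stopping-time} truncation: define the running cost $\bar L_t(u_\epsilon)$ for $t\in[0,T]$, set
\[
\tau_M^\epsilon=\inf\{t\in[0,T]:\bar L_t(u_\epsilon)\ge M\}\wedge T,
\]
and then take
\[
\psi_{\epsilon,M}(t)=\psi_\epsilon(t)\,1_{[0,\tau_M^\epsilon]}(t),\qquad
\varphi_{\epsilon,M}(t,x)=1+[\varphi_\epsilon(t,x)-1]\,1_{[0,\tau_M^\epsilon]}(t).
\]
Since $\tau_M^\epsilon$ is an $\{\bar{\mathcal F}_t\}$-stopping time, these modified controls remain predictable and lie in $\mathcal U^M$ by construction. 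Moreover $\bar{\mathbb P}(u_\epsilon\ne u_{\epsilon,M})\le\bar{\mathbb P}(\bar L_T(u_\epsilon)\ge M)\le (2\|h\|_\infty+1)/M$, which gives the same $O(1/M)$ error you claimed. After this correction the rest of your argument (tightness from compactness of $\bar S^M$, Condition~\ref{maincond1}(2), Fatou and lower semicontinuity of $\bar L_T$) goes through exactly as you wrote and coincides with the paper's proof.
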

\begin{proof}
In order to show that $I_\zeta$ is a rate function and that has
compact level sets on compacts, it is enough to demonstrate that
for all compact subsets $K$ of $\mathcal{E}_0$ and each $M<
\infty$,
\begin{displaymath}
\Lambda_{M,K} \doteq \cup_{\zeta \in K} \{ \phi \in \mathcal{E}:
I_\zeta(\phi) \leq M \}
\end{displaymath}
is a compact subset of $\mathcal{E}$. To establish this we will
show that $\Lambda_{M,K} = \cap_{n \geq 1}
\Gamma_{M+\frac{1}{n},K}$ is compact, where $\Gamma_{M,K}= \Bigl
\{\mathcal{G}^0 \bigl( \zeta, \int_0^\cdot f(s)ds, \nu_t^g \bigr):
x \in \mathcal{E}_0, (f,g) \in \bar{S}^M \Bigr\}$. There exists $\zeta \in K$ such that $I_\zeta(f)
\leq M$. We can find for each $n \geq 1,\; (f_n,g_n) \in
\mathbb{S}$ such that, for $\phi \in
\Lambda_{M,K}$, $\phi = \mathcal{G}^0 \bigl( \zeta,
\int_0^\cdot f(s)ds, \nu_t^g \bigr)$ and $\tilde{L}_T(f_n) \leq M
+\frac{1}{n}$ and $L_T(g_n) \leq M + \frac{1}{n}$. In particular
$(f_n,g_n) \in \bar{S}^{M+1/n}$, and thus $\phi \in
\bar{S}^{M+1/n}.$ Since $n \geq 1$ arbitrary, we have
$\Lambda_{M,K} \subseteq \cap_{n \geq 1} \Gamma_{M+1/n,K}$.
Conversely, suppose $\phi \in \bar{S}^{M+1/n}$, such that $\phi =
\mathcal{G}^0 \bigl( \zeta_n, \int_0^\cdot f_n(s)ds, \nu_t^g
\bigr)$. In particular, we have $I_{\zeta_n}(\phi) \leq M+
\frac{1}{n}$. The map $\zeta \mapsto I_\zeta(\phi)$ is lower
semi-continuous and $K$ is compact, and thus sending $n
\rightarrow \infty$ for some $\zeta \in K, \; I_\zeta(\phi) \leq
M$. Thus $\phi \in \Lambda_{M,K}$, and in turn,
$\cap_{n \geq 1} \Gamma_{M+1/n,K} \subseteq \Lambda_{M,K}$
follows. This proves the first part. For the second part of the
theorem consider $\zeta \in \mathcal{E}_0$ and let
$\{\zeta^\epsilon, \epsilon>0 \} \subseteq \mathcal{E}_0$ such
that $\zeta^\epsilon \rightarrow \zeta$, as $\epsilon \rightarrow
0$. Fix a bounded and continuous $F: \mathcal{E} \rightarrow
\mathbb{R}$. It suffices to show the Laplace Principle's upper and
lower bounds, \cite[Section 1.2]{DuEl}, in terms of $I_\zeta$ for the family $Z^{\epsilon, \zeta^\epsilon}$. For notation convenience we will write
$\tilde{S}^N(H_0)$, defined in (\ref{sn}), as $\tilde{S}^N$
and the reader should recall to $\mathcal{P}_2, \; S^N,
\mathcal{U}^N$ as in (\ref{p2h}), (\ref{twostars}) and
(\ref{cspace}) respectively.

\textit{Lower bound:} From Theorem \ref{repn203Hbm}, we have
\begin{equation} \label{lowerp}
-\epsilon \log \bar{\mathbb{E}} \bigl( \exp(-\frac{1}{\epsilon}
F(Z^{\epsilon,\zeta^\epsilon}))\bigr) = \inf_{u=(\psi, \phi) \in
\mathcal{U}} \mathbb{E}[\bar{L}_T(u)+ F \circ
\mathcal{G}^\epsilon(\zeta^\epsilon, \sqrt{\epsilon}W+\int_0^\cdot
\psi(s)ds, \epsilon N^{\epsilon^{-1}\phi})],
\end{equation}
since
$Z^{\epsilon,\zeta^\epsilon}=\mathcal{G}^\epsilon(\zeta^\epsilon,
\sqrt{\epsilon}W, \epsilon N^{\epsilon^{-1}})$ and
$N^{\epsilon^{-1}}$ is a Poisson random measure with intensity
$\epsilon^{-1} \nu_T$. Fix $\delta \in (0,1)$. Then for every
$\epsilon>0$ there exist $u_{\epsilon }=(\psi _{\epsilon },\varphi
_{\epsilon })\in \mathcal{U}$ such that the right hand side of
(\ref{lowerp})
is bounded below by%
\begin{equation}
\bar{\mathbb{E}}\left[ \bar{L}_{T}(u_{\epsilon })+F\circ \mathcal{G}%
^{\epsilon }\left(\zeta^\epsilon, \sqrt{\epsilon }W +\int_{0}^{\cdot }\psi _{\epsilon
}(s)ds,\epsilon N^{\epsilon ^{-1}\varphi _{\epsilon }}\right) \right]
-\delta .  \label{ins1138}
\end{equation}%
Clearly $\bar{\mathbb{E}}(\bar{L}_{T}(u_{\epsilon }))\leq 2||F||_{\infty }+1$.
For $t \in [0,T]$ let,
\begin{equation*}
L_{t}(u_{\epsilon })=\int_{[0,t]}\left( ||\psi _{\epsilon }(s)||^{2}+\int_{%
\mathbb{X}}\ell (\varphi _{\epsilon }(s,x))\,\nu (dx)\right) ds
\end{equation*}%
and define the following sequence of stopping times
\begin{equation*}
\tau _{M}^{\epsilon }=\inf \left\{ t\in \lbrack 0,T]:\bar{L}_{t}(u_{\epsilon
})\geq M\right\} \wedge T.
\end{equation*}%
Now for the pair of processes $u_{\epsilon ,M}=(\psi _{\epsilon
,M},\varphi _{\epsilon ,M})\in \mathcal{U}^{M}$, where
\begin{equation*}
\varphi _{\epsilon ,M}(t,x)=1+[\varphi _{\epsilon }(t,x)-1]1_{[0,\tau
_{M}^{\epsilon }]}(t),\; \; \psi _{\epsilon ,M}(t)=\psi _{\epsilon }(t)1_{[0,\tau
_{M}^{\epsilon }]}(t),\;t\in \lbrack 0,T],\;x\in \mathbb{X}\text{.}
\end{equation*}%
note that%
\begin{equation*}
\mathbb{\bar{P}}(u_{\epsilon }\neq u_{\epsilon ,M})\leq \mathbb{\bar{P}}(%
\bar{L}_{T}(u_{\epsilon })\geq M)\leq \frac{2||F||_{\infty }+1}{M}.
\end{equation*}%
Choose $M$ large enough so that the right side above is bounded by $\delta
/(2||F||_{\infty })$. Thus (\ref{ins1138}) is bounded below
by%
\begin{equation*}
\bar{\mathbb{E}}\left[ \bar{L}_{T}(u_{\epsilon ,M})+F\circ \mathcal{G}%
^{\epsilon }\left( \zeta^\epsilon, \sqrt{\epsilon }W +\int_{0}^{\cdot }\psi _{\epsilon
,M}(s)ds,\epsilon N^{\epsilon ^{-1}\varphi _{\epsilon ,M}}\right) \right]
-2\delta \text{.}
\end{equation*}%
Note that $\left\{ u_{\epsilon ,M}\right\} _{\epsilon >0}$ is a family of $%
\bar{S}^{M}$-valued random variables. Recalling that $\bar{S}^{M}$ is
compact, choose a weakly convergent subsequence and denote by $u=(\psi
,\varphi )$ the weak limit point. From part 2 of Condition \ref{maincond1} we
have that along this subsequence $\mathcal{G}^{\epsilon }(\zeta^\epsilon, \sqrt{\epsilon }%
W +\int_{0}^{\cdot }\psi _{\epsilon ,M}(s)ds,\epsilon N^{\epsilon
^{-1}\varphi _{\epsilon ,M}})$ converges weakly to $\mathcal{G}%
^{0}(\zeta, \int_{0}^{\cdot }\psi (s)ds,\nu _{T}^{\varphi })$. Thus, using Fatou's
lemma and lower semicontinuity properties of the relative entropy function%
\begin{eqnarray*}
\lefteqn{\liminf_{\epsilon \rightarrow 0}-\epsilon \log \mathbb{\bar{E}}%
\left[ \exp{- \frac{1}{\epsilon} F(Z^{\epsilon,\zeta^\epsilon })}\right] } \\
&\geq &\liminf_{\epsilon \rightarrow 0}\bar{\mathbb{E}}\left[ \bar{L}%
_{T}(u_{\epsilon })+F\circ \mathcal{G}^{\epsilon }\left( \zeta^\epsilon, \sqrt{\epsilon }%
W +\int_{0}^{\cdot }\psi _{\epsilon ,M}(s)ds,\epsilon N^{\epsilon
^{-1}\varphi _{\epsilon ,M}}\right) \right] -2\delta \\
&\geq &\bar{\mathbb{E}}\left[ \bar{L}_{T}(u)+F\circ \mathcal{G}^{0}\left( \zeta,
\int_{0}^{\cdot }\psi (s)ds,\nu _{T}^{\varphi }\right) \right] -2\delta \\
&\geq &\inf_{\phi \in \mathbb{U}}\inf_{q\in \mathbb{S}_{\phi }}\left( \bar{L}%
_{T}(q)+F(\phi )\right) -2\delta \\
&=&\inf_{\phi \in \mathbb{U}}(I_\zeta(\phi )+F(\phi ))-2\delta \text{.}
\end{eqnarray*}%
Since $\delta \in (0,1)$ is arbitrary, this completes the proof of the lower
bound.

\emph{Upper Bound}. We need to establish that
\begin{displaymath}
-\epsilon \log \bar{\mathbb{E}} \bigl( \exp(-\frac{1}{\epsilon}
F(Z^{\epsilon,\zeta^\epsilon}))\bigr)  \leq \inf_{\phi \in
\mathbb{U}}(I_\zeta(\phi )+F(\phi ))
\end{displaymath}
Let $\delta \in
(0,1)$ be arbitrary and $\phi _{0}\in \mathbb{U}$ such that%
\begin{equation*}
I_\zeta(\phi _{0})+F(\phi _{0})\leq \inf_{\phi \in \mathbb{U}}(I_\zeta(\phi )+F(\phi
))+\delta \text{.}
\end{equation*}%
Choose $q=(f,g)\in \mathbb{S}_{\phi _{0}}$ such that $\bar{L}_{T}(q)\leq
I_\zeta(\phi _{0})+\delta $ and $\phi _{0}=\mathcal{G}^{0}\left(\zeta, \int_{0}^{\cdot }f(s)ds,\nu _{T}^{g}\right)$.

But according to (\ref{lowerp}) we have that%
\begin{eqnarray*}
\limsup_{\epsilon \rightarrow 0}-\epsilon \log \mathbb{\bar{E}}\left[
e^{-\frac{1}{\epsilon}F(Z^{\epsilon,\zeta^\epsilon})}\right] &\leq &\bar{L}_{T}(q)+\limsup_{%
\epsilon \rightarrow 0}\bar{\mathbb{E}}\left[ F\circ \mathcal{G}^{\epsilon
}\left( \zeta^\epsilon, \sqrt{\epsilon }W +\int_{0}^{\cdot }f(s)ds,\epsilon N^{\epsilon
^{-1}g}\right) \right] \\
&\leq &I_\zeta(\phi _{0})+\delta +F\circ \mathcal{G}^{0}\left(\zeta, \int_{0}^{\cdot
}f(s)ds,\nu _{T}^{g}\right) \\
&=&I_\zeta(\phi _{0})+F(\phi _{0})+\delta \\
&\leq &\inf_{\phi \in \mathbb{U}}(I_\zeta(\phi )+F(\phi ))+2\delta \text{,}
\end{eqnarray*}%
Since $\delta \in (0,1)$ is arbitrary the proof of the theorem is complete.
 $\blacksquare$
\end{proof}

Next let $\beta \equiv \{\beta _{i}\}$ be a sequence of
independent standard
real Brownian motions on $(\Omega ,\mathcal{F},\mathbb{P},\{\mathcal{F}%
_{t}\})$. Recall that $\beta $ is a $(C([0,T]:{\mathbb{R}}^{\infty
}),\mathcal{B}(C([0,T]:{\mathbb{R}}^{\infty })))\equiv (S,\mathcal{%
S})$ valued random variable. For each $\varepsilon >0$ let $\mathcal{G}%
^{\varepsilon }:\mathcal{E}_{0}\times S\rightarrow \mathbb{V}$ be
a measurable map and define
\begin{equation}
Z^{\varepsilon ,\zeta}\doteq \mathcal{G}^{\varepsilon
}(\zeta,\sqrt{\epsilon }\beta , \epsilon N^{\epsilon^{-1}}).
\label{918}
\end{equation}%
We now consider the Laplace principle for the family
$\{Z^{\varepsilon ,\zeta}\}$, as in \ref{918}, and introduce the analog of Condition
\ref{maincond1} for this setting. Define $\tilde{S}^{N}(l_{2})$ as
in \eqref{sn}, with $H_{0}$ there replaced by the Hilbert space
$l_{2}$. The reader should recall $\mathcal{U}$ as in terms of
(\ref{twostars}) and consider $\bar{S}^N \equiv \bar{S}^N(l_2)=
\tilde{S}^{N}(l_{2})
\times S^N$ with the usual product topology. Let $\mathbb{S}%
=\cup _{N\geq 1}\bar{S}^{N}$ and let $\mathcal{U}^{N}$ as defined
in (\ref{cspace}).

\begin{condition}
\label{maincond2}There exists a measurable map
$\mathcal{G}^{0}:\mathcal{E}_0 \times \mathbb{V} \rightarrow
\mathcal{E}$ such that the following hold.
\begin{enumerate}
\item For $N\in \mathbb{N}$ let $(f_{n},g_{n})$, $(f,g)\in
\bar{S}^{N}$ be such that $(\zeta_n,f_{n},g_{n})\rightarrow
(\zeta,f,g)$. Then
\begin{equation*}
\mathcal{G}^{0}\left( \zeta_n,\int_{0}^{\cdot }f_{n}(s)ds,\nu
_{T}^{g_{n}}\right) \rightarrow \mathcal{G}^{0}\left( \zeta,
\int_{0}^{\cdot }f(s)ds,\nu _{T}^{g}\right) .
\end{equation*}

\item For $N\in \mathbb{N}$ let $u_{\epsilon }=(\psi _{\epsilon
},\varphi
_{\epsilon })$, $u=(\psi ,\varphi )\in \mathcal{U}^{N}$ be such that, as $%
\epsilon \rightarrow 0$, $u_{\epsilon }$ converges in distribution
to $u$ and $\{\zeta^\epsilon\} \subset \mathcal{E}_0,\;
\zeta^\epsilon \rightarrow \zeta, \mbox{ as } \epsilon \rightarrow
0$.
Then%
\begin{equation*}
\mathcal{G}^{\epsilon }\left( \zeta^\epsilon, \sqrt{\epsilon }\beta
+\int_{0}^{\cdot }\psi _{\epsilon }(s)ds,\,\epsilon N^{\epsilon
^{-1}\varphi _{\epsilon }}\right) \Rightarrow
\mathcal{G}^{0}\left(\zeta, \int_{0}^{\cdot }\psi (s)ds,\nu
_{T}^{\varphi }\right) .
\end{equation*}
\end{enumerate}
\end{condition}

For $\phi \in \mathcal{E}$, define $\mathbb{S}_{\phi }=\left\{
(f,g)\in \mathbb{S}:\phi =\mathcal{G}^{0}(\zeta,\int_{0}^{\cdot
}f(s)ds,\nu _{T}^{g})\right\} $. Let $I_\zeta:\mathcal{E}
\rightarrow \lbrack 0,\infty ]$ be defined by
\begin{equation}
I_\zeta(\phi )=\inf_{q=(f,g)\in \mathbb{S}_{\phi }}\left\{
\bar{L}_{T}(q)\right\} \text{.}  \label{insrate1}
\end{equation}%

\begin{theorem}
\label{uni-lap-hil2} Let $Z^{\epsilon ,\zeta}$ be defined as in
(\ref{918}) and suppose that Condition \ref{maincond2} holds.
Suppose that for all $f\in \mathcal{E},\;\zeta \mapsto
I_{\zeta}(f)$ is a lower semi-continuous (l.s.c.) map from
$\mathcal{E}_{0}$ to $[0,\infty ]$, where $I_{\zeta}$ as in (\ref{insrate1}). Then,
for all $\zeta \in \mathcal{E}_{0},\;f \mapsto I_{\zeta}(f)$ is a rate function on $%
\mathcal{E}$ and the family $\{I_{\zeta}(\cdot ),\;\zeta\in
\mathcal{E}_{0}\}$ of
rate functions has compact level sets on compacts. Furthermore, the family $%
\{Z^{\epsilon ,\zeta}\}$ satisfies the Laplace principle on $%
\mathcal{E}$, with rate function $I_{\zeta}$, uniformly on compact
subsets of $\mathcal{E}_{0}$.
\end{theorem}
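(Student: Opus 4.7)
The plan is to follow the same two-part structure used for Theorem \ref{uni-lap-hil}: first establish that the rate functions $\{I_\zeta\}$ have compact level sets on compacts, then prove the Laplace principle's upper and lower bounds. The arguments carry over almost verbatim, the only structural change being that the variational representation invoked is Theorem \ref{repn203bm} (rather than Theorem \ref{repn203Hbm}), and the space of shift controls $\mathcal{P}_2$ is the one defined in \eqref{little l} (i.e., square-integrable $l_2$-valued predictable processes) rather than $\mathcal{P}_2(H)$. Correspondingly, the weak compactness used is that of $\tilde{S}^N(l_2)$ in place of $\tilde{S}^N(H_0)$.

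For the compact level sets property, I would show $\Lambda_{M,K} = \cap_{n\ge 1} \Gamma_{M+1/n,K}$, where $\Gamma_{M,K} = \{\mathcal{G}^0(\zeta, \int_0^\cdot f(s)\,ds, \nu_T^g) : \zeta\in K,\, (f,g)\in \bar S^M\}$. The forward inclusion follows by selecting, for each $\phi\in\Lambda_{M,K}$ with $I_\zeta(\phi)\le M$, controls $(f_n,g_n)\in \mathbb{S}_\phi$ with $\bar L_T(f_n,g_n)\le M+1/n$. The reverse inclusion uses that $K\times\bar S^M$ is compact (by compactness of $K$ and $\bar S^M = \tilde S^M(l_2)\times S^M$) combined with the continuity in part 1 of Condition \ref{maincond2}; this produces, for each $\phi$ in the intersection, a sequence $\zeta_n\in K$ with $I_{\zeta_n}(\phi)\le M+1/n$, and the lower semi-continuity of $\zeta\mapsto I_\zeta(\phi)$ then yields some $\zeta\in K$ with $I_\zeta(\phi)\le M$.

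For the Laplace principle, fix a compact $K\subset\mathcal{E}_0$, $\zeta^\epsilon\to\zeta$ in $\mathcal{E}_0$, and a bounded continuous $F:\mathcal{E}\to\mathbb{R}$. Theorem \ref{repn203bm} gives
\begin{equation*}
-\epsilon\log\bar{\mathbb{E}}\bigl(e^{-\epsilon^{-1}F(Z^{\epsilon,\zeta^\epsilon})}\bigr)
=\inf_{u=(\psi,\varphi)\in\mathcal{U}} \bar{\mathbb{E}}\Bigl[\bar L_T(u)+F\circ\mathcal{G}^\epsilon\bigl(\zeta^\epsilon,\sqrt\epsilon\,\beta+\tfrac{}{}\textstyle\int_0^\cdot\psi(s)\,ds,\epsilon N^{\epsilon^{-1}\varphi}\bigr)\Bigr].
\end{equation*}
For the lower bound, I would select $\delta$-near-optimizers $u_\epsilon$, truncate them via the stopping times $\tau_M^\epsilon$ exactly as in the proof of Theorem \ref{uni-lap-hil} to obtain $u_{\epsilon,M}\in\mathcal{U}^M$, extract a weakly convergent subsequence $u_{\epsilon,M}\Rightarrow u=(\psi,\varphi)\in\mathcal{U}^M$ by compactness of $\bar S^M$, then apply part 2 of Condition \ref{maincond2} to conclude $\mathcal{G}^\epsilon(\zeta^\epsilon,\sqrt\epsilon\,\beta+\int_0^\cdot\psi_{\epsilon,M}\,ds,\epsilon N^{\epsilon^{-1}\varphi_{\epsilon,M}})\Rightarrow\mathcal{G}^0(\zeta,\int_0^\cdot\psi\,ds,\nu_T^\varphi)$. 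Fatou's lemma and lower semicontinuity of $\bar L_T$ then yield $\liminf\ge \inf_\phi(I_\zeta(\phi)+F(\phi))-2\delta$. The upper bound is obtained by picking $\phi_0\in\mathcal{E}$ with $I_\zeta(\phi_0)+F(\phi_0)\le\inf_\phi(I_\zeta(\phi)+F(\phi))+\delta$, choosing $(f,g)\in\mathbb{S}_{\phi_0}$ with $\bar L_T(f,g)\le I_\zeta(\phi_0)+\delta$, using this deterministic pair as the control in the variational formula, and invoking part 2 of Condition \ref{maincond2} to pass to the limit.

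The only real obstacle is ensuring the key compactness and convergence machinery transfers from the Hilbert-valued setting to the $l_2$-sequence setting; but this is immediate once one observes that $\tilde S^N(l_2)$ is a closed ball in the Hilbert space $L^2([0,T]:l_2)$, hence weakly compact with metrizable weak topology given by $d_1$, which is precisely the role played by $\tilde S^N(H_0)$ in the proof of Theorem \ref{uni-lap-hil}. Alternatively, one can bypass repetition altogether by appealing to Remark \ref{rem1107}: the sequence $\beta$ is a $\bar l_2$-valued $Q$-Wiener process, so the present theorem is a direct corollary of Theorem \ref{uni-lap-hil} specialized to $H=\bar l_2$ with the $Q$ identified there, thereby inheriting both the conclusion and the proof.
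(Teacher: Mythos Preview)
Your proposal is correct, and in fact you anticipate the paper's own argument in your final paragraph. The paper does \emph{not} repeat the proof of Theorem \ref{uni-lap-hil} with $l_2$ in place of $H_0$; instead it takes precisely the shortcut you describe as an alternative: invoking Remark \ref{rem1107} to view $\beta$ as a $\bar l_2$-valued $Q$-Wiener process (with $H_0=Q^{1/2}\bar l_2=l_2$), composing $\mathcal{G}^\epsilon$ with the continuous embedding $i:C([0,T]:\bar l_2)\hookrightarrow C([0,T]:\mathbb{R}^\infty)$ to obtain maps $\hat{\mathcal{G}}^\epsilon$ on $\mathcal{E}_0\times\mathbb{V}(\bar l_2)$, checking that Condition \ref{maincond2} transfers to Condition \ref{maincond1} for these $\hat{\mathcal{G}}^\epsilon$, and then applying Theorem \ref{uni-lap-hil} directly. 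Your primary route---rerunning the variational-representation argument with Theorem \ref{repn203bm} in place of Theorem \ref{repn203Hbm} and $\tilde S^N(l_2)$ in place of $\tilde S^N(H_0)$---would also work and is self-contained, but the paper's reduction is shorter and avoids duplication.
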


\begin{proof} From Remark
\ref{rem1107} we can regard $\beta $ as an $H$ valued $Q$--Wiener
process, where $H=\bar l_{2}$ and $Q$ is a trace class operator, as
defined in Remark \ref{rem1107}. Also,
one can check that $H_{0}\doteq Q^{1/2}H=l_{2}$. Since the embedding map $%
i:C([0,T]:\bar l_{2})\rightarrow C([0,T]:{\mathbb{R}}^{\infty })$ is
continuous, $\hat{\mathcal{G}}^{\varepsilon }:\mathcal{E%
}_{0}\times \mathbb{V}(\bar l_{2})\rightarrow \mathcal{E}$ defined as $\hat{%
\mathcal{G}}^{\varepsilon }(\zeta,\sqrt{\ep}v, \epsilon N^{\epsilon^{-1}})
\doteq \mathcal{G}^{\varepsilon }(\zeta,\sqrt{\ep}i(v),\epsilon N^{\epsilon^{-1}})$, $%
(\zeta,v)\in \mathcal{E}_{0}\times C([0,T]:\bar l_{2})$ is a measurable map
for every $\varepsilon \geq 0$. Note also that for $\varepsilon >0$, $%
Z^{\varepsilon ,\zeta^\epsilon}=\hat{\mathcal{G}}^{\varepsilon
}(\zeta,\sqrt{\ep}\beta, \ep N^{\ep^{-1}} )$ a.s. Since Condition
\ref{maincond2} holds, we have that both parts of Condition
\ref{maincond1} are satisfied with $\mathcal{G}^{\varepsilon }$
there
replaced by $\hat{\mathcal{G}}^{\varepsilon }$ for $\varepsilon \geq 0$ and $%
W$ replaced with $\beta $. Define $\hat{I}_{\zeta}(\phi)$ by the right side of %
\eqref{rateH} with $\mathcal{G}^{0}$ replaced by $\hat{\mathcal{G}}%
^{0}$. Clearly $I_{\zeta}(\phi)=\hat{I}_{\zeta}(\phi)$ for all $(x,f)\in \mathcal{E}%
_{0}\times \mathcal{E}$. The result is now an immediate consequence of
Theorem \ref{uni-lap-hil}. $\blacksquare$
\end{proof}

Finally, we consider the uniform Laplace principle for functionals
of a Brownian sheet and a Poisson random measure. Let $B$ be a
Brownian sheet as in
Definition \ref{defn BS}. Let $\mathcal{G}^{\varepsilon }:\mathcal{E}%
_{0}\times \mathbb{V}_{BS}\rightarrow \mathcal{E}$, $\varepsilon
>0$ be a family of measurable maps. Define
\begin{equation} \label{fam3}
Z^{\varepsilon ,\zeta}\doteq \mathcal{G}^{\varepsilon
}(\zeta,\sqrt{\epsilon }B,\epsilon N^{\epsilon^{-1}}).
\end{equation}

We now provide
sufficient conditions for Laplace principle to hold for the family $%
\{Z^{\varepsilon ,\zeta}\}$.

Analogous to classes defined in \eqref{sn}, we introduce
%\begin{equation}
\[
\tilde{S}^{N}\doteq \left\{ \phi \in L^{2}([0,T]\times \mathcal{O}%
):\int_{[0,T]\times \mathcal{O}}\phi ^{2}(s,r)dsdr\leq N\right\} .
%\label{snbs}
\]
%\end{equation}%
%\begin{equation}
%\mathcal{P}_{2}^{N}\doteq \{u\in \mathcal{P}_{2}:u(\omega )\in S^{N},\mathbb{%
%P}-a.s.\}.  \label{alphanbs}
%\end{equation}%
Once more, $\tilde{S}^{N}$ is endowed with the weak topology on
$L^{2}([0,T]\times \mathcal{O})$, under which it is a compact
metric space. For $u\in L^{2}([0,T]\times \mathcal{O})$, define
Int$(u)\in C([0,T]\times \mathcal{O}:\mathbb{R})$ by
\begin{equation}
\text{Int}(u)(t,x)\doteq \int_{\lbrack 0,t]\times (\mathcal{O}\cap (-\infty
,x])}u(s,y)dsdy,  \label{definteg}
\end{equation}%
where $(-\infty ,x]=\{y:y_{i}\leq x_{i} \mbox{ for all }
i=1,\cdots ,d\}$. Consider $\bar{S}^N \equiv \bar{S}^N_{BS}=
\tilde{S}^{N} \times S^N$ with the usual product topology. Let $\mathbb{S}%
=\cup _{N\geq 1}\bar{S}^{N}$ and let $\mathcal{U}^{N}$ as defined
in (\ref{cspace}).

\begin{condition}
\label{maincond3}There exists a measurable map $\mathcal{G}^{0}:\mathcal{E}_0 \times \mathbb{V}_{BS}%
\rightarrow \mathbb{U}$ such that the following hold.

\begin{enumerate}
\item For $N\in \mathbb{N}$ let $(f_{n},g_{n})$, $(f,g)\in
\bar{S}^{N}$ be such that $(\zeta_n,f_{n},g_{n})\rightarrow
(\zeta,f,g)$. Then
\begin{equation*}
\mathcal{G}^{0}\left( \zeta_n,Int(f_{n}),\nu _{T}^{g_{n}}\right) \rightarrow
\mathcal{G}^{0}\left( \zeta , Int(f(s)),\nu _{T}^{g}\right) .
\end{equation*}

\item For $N\in \mathbb{N}$ let $u_{\epsilon }=(\psi _{\epsilon
},\varphi
_{\epsilon })$, $u=(\psi ,\varphi )\in \mathcal{U}^{N}$ be such that, as $%
\epsilon \rightarrow 0$, $u_{\epsilon }$ converges in distribution
to $u$ and $\{\zeta^\epsilon\} \subset \mathcal{E}_0,\;
\zeta^\epsilon \rightarrow \zeta, \mbox{ as } \epsilon \rightarrow
0$.
Then%
\begin{equation*}
\mathcal{G}^{\epsilon }\left( \zeta^\epsilon, \sqrt{\epsilon }B
+Int(\psi _{\epsilon
}),\,\epsilon N^{\epsilon ^{-1}\varphi _{\epsilon }}\right)
\Rightarrow \mathcal{G}^{0}\left(\zeta, Int(\psi),\nu _{T}^{\varphi
}\right) .
\end{equation*}
\end{enumerate}

\end{condition}

For $\phi \in \mathcal{E}$, define $\mathbb{S}_{\phi }=\left\{
(f,g)\in \mathbb{S}:\phi =\mathcal{G}^{0}(\zeta,\int_{[0,t]\times
(\mathcal{O} \cap (-\infty,x])}f(s)ds,\nu _{T}^{g})\right\} $. Let
$I_\zeta:\mathcal{E\rightarrow \lbrack }0,\infty ]$ be defined by
\begin{equation}
I_\zeta(\phi )=\inf_{q=(f,g)\in \mathbb{S}_{\phi }}\left\{
\bar{L}_{T}(q)\right\} \text{.}  \label{insrate2}
\end{equation}%

\begin{theorem}
\label{uni-lap-hil3} Let $Z^{\epsilon ,\zeta}$ be defined as in
(\ref{fam3}) and suppose that Condition \ref{maincond3} holds.
Suppose that for all $f\in \mathcal{E},\;\zeta \mapsto
I_{\zeta}(f)$ is a lower semi-continuous (l.s.c.) map from
$\mathcal{E}_{0}$ to $[0,\infty ]$, where $I_\zeta$ as in (\ref{insrate2}). Then,
for all $\zeta \in \mathcal{E}_{0},\;f \mapsto I_{\zeta}(f)$ is a rate function on $%
\mathcal{E}$ and the family $\{I_{\zeta}(\cdot ),\;\zeta\in
\mathcal{E}_{0}\}$ of
rate functions has compact level sets on compacts. Furthermore, the family $%
\{Z^{\epsilon ,\zeta}\}$ satisfies the Laplace principle on $%
\mathcal{E}$, with rate function $I_{\zeta}$, uniformly on compact subsets of $%
\mathcal{E}_{0}$.
\end{theorem}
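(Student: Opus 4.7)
The plan is to reduce Theorem \ref{uni-lap-hil3} to Theorem \ref{uni-lap-hil}, in the spirit of the short proof of Theorem \ref{uni-lap-hil2}. The key observation is that the Brownian sheet $B$ on $[0,T]\times\mathcal{O}$ admits a canonical realization as an $H$-valued $Q$-Wiener process $W$ whose reproducing kernel Hilbert space is $H_0=L^2(\mathcal{O})$, and that under this identification the Brownian-sheet control space $\mathcal{P}_2^{BS}$ coincides with $\mathcal{P}_2(H)$ while the operator $\text{Int}(\cdot)$ appearing in Condition \ref{maincond3} corresponds precisely to the composition of the $H_0$-valued mean shift $\int_0^\cdot \psi(s)\,ds$ with the distribution-function embedding of $L^2(\mathcal{O})$ into $C(\mathcal{O})$.

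To implement this, first I would choose a separable Hilbert space $H$ containing $L^2(\mathcal{O})$ via a Hilbert--Schmidt embedding, together with a trace class operator $Q$ on $H$ satisfying $Q^{1/2}H=L^2(\mathcal{O})$; this is a standard construction (for instance, one may take $H$ to be a sufficiently negative-index weighted Sobolev space on $\mathcal{O}$). Let $W$ denote the associated $H$-valued $Q$-Wiener process. Next I would introduce the continuous evaluation map $\Xi:C([0,T]:H)\to C([0,T]\times\mathcal{O}:\mathbb{R})$ defined by $\Xi(w)(t,x)=\langle w(t),\mathbf{1}_{\mathcal{O}\cap(-\infty,x]}\rangle_{L^2(\mathcal{O})}$, so that $\Xi(W)=B$ a.s. Then I would define the lifted map $\hat{\mathcal{G}}^{\epsilon}:\mathcal{E}_0\times\mathbb{V}(H)\to\mathcal{E}$ by $\hat{\mathcal{G}}^{\epsilon}(\zeta,v,m)=\mathcal{G}^{\epsilon}(\zeta,\Xi(v),m)$, which is measurable by continuity of $\Xi$, and observe that $Z^{\epsilon,\zeta}=\hat{\mathcal{G}}^{\epsilon}(\zeta,\sqrt{\epsilon}\,W,\epsilon N^{\epsilon^{-1}})$ a.s.

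Under the natural identification $L^2([0,T]:H_0)\cong L^2([0,T]\times\mathcal{O})$, the $H_0$-valued mean shift $\int_0^\cdot f(s)\,ds$ maps under $\Xi$ to $\text{Int}(f)$, the cost functionals $\tilde{L}_T^H$ and $\tilde{L}_T^{BS}$ agree, and the weak topologies on $\tilde{S}^N(H_0)$ and $\tilde{S}^N$ (the Brownian-sheet version) coincide since both are the weak topology on the same $L^2$ ball. Consequently, the two parts of Condition \ref{maincond3} for $\mathcal{G}^{\epsilon}$ translate verbatim into the two parts of Condition \ref{maincond1} for $\hat{\mathcal{G}}^{\epsilon}$. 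Moreover, the rate function $\hat{I}_\zeta$ associated with $\hat{\mathcal{G}}^0$ through \eqref{rateH} coincides with $I_\zeta$ defined in \eqref{insrate2}. The conclusion of Theorem \ref{uni-lap-hil3} then follows immediately from Theorem \ref{uni-lap-hil}.

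The principal obstacle is the technical setup in the first step: choosing $H$, $Q$ and the embedding so that $B$ is genuinely an $H$-valued $Q$-Wiener process with $Q^{1/2}H=L^2(\mathcal{O})$ and so that the evaluation functional $\Xi$ is continuous on $C([0,T]:H)$ (continuity of $\Xi$ is essential for measurability of $\hat{\mathcal{G}}^{\epsilon}$ and for preserving the weak convergence in part~2 of Condition \ref{maincond3}). Once this infrastructure is in place the remainder of the argument is essentially bookkeeping, since Theorem \ref{uni-lap-hil} supplies the uniform Laplace principle and the identification $I_\zeta=\hat{I}_\zeta$ is immediate from the correspondence between controls and shifts.
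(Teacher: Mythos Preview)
Your overall strategy---reduce the Brownian--sheet theorem to an already--proved case by composing $\mathcal{G}^{\varepsilon}$ with a map that converts a Hilbert--space valued Wiener process into the sheet---matches the spirit of the paper's proof. The paper, however, reduces to Theorem~\ref{uni-lap-hil2} (the i.i.d.\ sequence case) rather than directly to Theorem~\ref{uni-lap-hil}, and more importantly it does \emph{not} rely on any continuity of the transfer map. It takes a CONS $\{e_i\}$ of $L^2(\mathcal{O})$, sets $\beta_i(t)=\int_{[0,t]\times\mathcal{O}}e_i(x)\,B(ds\,dx)$, invokes \cite[Proposition~3]{BDM1} to get a merely \emph{measurable} map $h:C([0,T]:\mathbb{R}^{\infty})\to C([0,T]\times\mathcal{O}:\mathbb{R})$ with $h(\beta)=B$ a.s., and then uses Girsanov's theorem to show $h(\beta+\int_0^{\cdot}\hat u^{\varepsilon})=B+\text{Int}(u_{\hat u^{\varepsilon}})$ a.s. for controls $\hat u^{\varepsilon}\in\tilde S^M(l_2)$. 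This last identity is what makes part~2 of Condition~\ref{maincond2} transfer to $\hat{\mathcal{G}}^{\varepsilon}$.

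The gap in your proposal is precisely the continuity of $\Xi$. You require a Hilbert space $H$ such that (i) the embedding $L^2(\mathcal{O})\hookrightarrow H$ is Hilbert--Schmidt and (ii) for every $x$, the functional $h\mapsto\langle h,\mathbf{1}_{\mathcal{O}\cap(-\infty,x]}\rangle_{L^2}$ extends to a bounded linear functional on $H$, uniformly in $x$. These two requirements are incompatible already for $d=1$ and $\mathcal{O}=(0,1)$: if $j:L^2\to H$ is the embedding and $\sigma_k$ are the singular values of $j^{*}$ in some orthonormal system $\{g_k\}$ of $L^2$, then (i) forces $\sum_k\sigma_k^2<\infty$ while (ii) forces $\sup_x\sum_k|G_k(x)|^2/\sigma_k^2<\infty$ with $G_k(x)=\int_0^x g_k$. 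For any reasonable choice of $\{g_k\}$ (trigonometric, Haar, \dots) one has $\sup_x|G_k(x)|\sim c_k$ with $\sum c_k$ diverging no slower than the harmonic series, and Cauchy--Schwarz then gives $\infty=\bigl(\sum_k c_k\bigr)^2\le\bigl(\sum_k\sigma_k^2\bigr)\bigl(\sum_k c_k^2/\sigma_k^2\bigr)$, a contradiction. In short, the indicator $\mathbf{1}_{(-\infty,x]}$ is not smooth enough to live in the dual of any $H$ that is ``large enough'' for $L^2\hookrightarrow H$ to be Hilbert--Schmidt.

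Your remark that continuity of $\Xi$ is ``essential for measurability of $\hat{\mathcal{G}}^{\varepsilon}$'' is also inaccurate: measurability of the transfer map suffices for that. What continuity would buy you is the identity $\Xi(W+\int_0^{\cdot}\psi)=\Xi(W)+\text{Int}(\psi)$ for free, by linearity. The paper obtains this identity instead by an a.s.\ argument via Girsanov, which is the step your plan is missing. If you replace your continuous $\Xi$ by a measurable one and insert a Girsanov argument at that point, your proof becomes essentially the paper's.
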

\begin{proof}
Let $\{e _{i}\}_{i=1}^{\infty }$ be a complete orthonormal system
in $L^{2}(\mathcal{O})$ and let
\begin{equation*}
\beta _{i}(t)\doteq \int_{\lbrack 0,t]\times \mathcal{O}} e_{i}(x)B(dsdx),\;\;t\in \lbrack 0,T],
\;i=1,2,\cdots .
\end{equation*}%
Then $\beta \equiv \{\beta _{i}\}$ is a sequence of independent
standard real Brownian motions and can be regarded as an
$(S,\mathcal{S})-$valued random variable. Now, from
\cite[Proposition 3]{BDM1}, there is a measurable map $h:C([0,T]:{\mathbb{R}}%
^{\infty })\rightarrow C([0,T]\times \mathcal{O}:{\mathbb{R}})$ such that $%
h(\beta )=B$ a.s. Define, for $\varepsilon >0$, $\hat{\mathcal{G}}%
^{\varepsilon }:\mathcal{E}_{0}\times \mathbb{V} \rightarrow
\mathcal{E}$ as $\hat{\mathcal{G}}^{\varepsilon
}(\zeta,\sqrt{\ep}v,\ep N^{\eps^{-1}})\doteq
\mathcal{G}^{\varepsilon }(\zeta,\sqrt{\ep}h(v), \ep
N^{\eps^{-1}})$, $(\zeta,v,\phi) \in \mathcal{E}_{0}\times
\tilde{S}^N(l_2)$. Clearly $\hat{\mathcal{G}}^{\varepsilon }$ is a
measurable map and 
\begin{displaymath}
\hat{\mathcal{G}}^{\varepsilon
}(\zeta,\sqrt{\ep}\beta, \ep N^{\eps^{-1}} )=Z^{\varepsilon
,\zeta} \mbox{ a.s.} 
\end{displaymath}
Next, note that
\begin{equation*}
S_{ac}\doteq \left\{ v\in C([0,T]:{\mathbb{R}}^{\infty }):v(t)=\int_{0}^{t}%
\hat{u}(s)ds,\,t\in \lbrack 0,T],\,\mbox{for some}\,\hat{u}\in
L^{2}([0,T]:l_{2})\right\}
\end{equation*}%
is a measurable subset of $S$. For $\hat{u}\in L^{2}([0,T]:l_{2})$, define $%
u_{\hat{u}}\in L^{2}([0,T]\times \mathcal{O})$ as
\begin{equation*}
u_{\hat{u}}(t,x)=\sum_{i=1}^{\infty }\hat{u}_{i}(t) e _{i}(x),\;(t,x)\in
\lbrack 0,T]\times \mathcal{O}.
\end{equation*}%
Define $\hat{\mathcal{G}}^{0}:\mathcal{E}_{0}\times
\mathbb{V}\rightarrow \mathcal{E}$ as
\begin{equation*}
\hat{\mathcal{G}}^{0}(\zeta,\int_{0}^{\cdot
}\hat{u}(s)ds, \nu_T^g)\doteq
\mathcal{G}^{0}(\zeta,\mbox{Int}(u_{\hat{u}}),\nu_T^g)
\end{equation*}%
% We set $\hat{\mathcal{G}}^{0}(\zeta,v,g)=0$ for all other
% $(\zeta,v,g)$. 
and note that
\begin{equation*}
\left\{ \hat{\mathcal{G}}^{0}\left( \zeta,\int_{0}^{\cdot }\hat{u}(s)ds,
\nu_T^g \right) :%
\hat{u}\in S^{M}(l_{2}), \zeta \in K, K \subset \mathcal{E}_0 \right\} =\left\{
\mathcal{G}^{0}\left( \zeta, \mbox{Int}(u), \nu_T^g \right) :u\in
S^{M}, \zeta \in K \subset \mathcal{E}_0 \right\} .
\end{equation*}%
Since Condition \ref{maincond1} holds, we have that its first part
holds with $\mathcal{G}^{0}$ there replaced by $\hat{\mathcal{G}}%
^{0}$. Next, an application of Girsanov's theorem gives that, for every $%
\hat{u}^{\varepsilon }\in \tilde{S}^M(l_{2})$
\begin{equation*}
h\left( \beta +\frac{1}{\sqrt{\epsilon }}\int_{0}^{\cdot
}\hat{u}^{\epsilon
}(s)ds\right) =B+\frac{1}{\sqrt{\epsilon }}\mbox{Int}(u_{\hat{u}%
^{\varepsilon }}),\,a.s.
\end{equation*}%
In particular for every $M<\infty $ and families
$\{\hat{u}^{\epsilon },\phi^\epsilon\}\subset \bar{S}^{M}(l_{2})$
and $\{\zeta^{\varepsilon }\}\subset \mathcal{E}_{0}$, such that
$\{\hat{u}^{\epsilon },\phi^\epsilon\}$ converges in distribution
to $\{\hat{u},\phi\}$ and $\zeta^{\varepsilon }\rightarrow \zeta$,
we have, as $\varepsilon \rightarrow 0$,
\begin{eqnarray*}
\hat{\mathcal{G}}^{\epsilon }\left( \zeta^{\varepsilon },\sqrt{%
\epsilon }\beta +\int_{0}^{\cdot }\hat{u}^{\epsilon }(s)ds,
\epsilon N^{\epsilon^{-1}\phi_\epsilon} \right) &=&\mathcal{G}
^{\epsilon }\left( \zeta^{\varepsilon },\sqrt{\epsilon }B+\mbox{Int}%
(u_{\hat{u}^{\epsilon }}), \epsilon N^{\epsilon^{-1}\phi_\epsilon}\right)  \\
&\Rightarrow &\mathcal{G}^{0}\left( \zeta,\mbox{Int}(u_{\hat{u}}), \nu_T^\phi\right)  \\
&=&\hat{\mathcal{G}}^{0}\left( \zeta,\int_{0}^{\cdot
}\hat{u}(s)ds, \nu_T^\phi \right) .
\end{eqnarray*}%
Thus second part of Condition \ref{maincond1} is satisfied with $\mathcal{G}%
^{\varepsilon }$ replaced by $\hat{\mathcal{G}}^{\varepsilon }$, $%
\varepsilon \geq 0$. The result now follows on noting that if
$\hat{I}_{\zeta}(f) $ is defined by the right side of
\eqref{insrate1} on replacing $\mathcal{G}^{0}
$ there by $\hat{\mathcal{G}}^{0}$, then $\hat{I}_{\zeta}(f)=I_{\zeta}(f)$ for all $%
(\zeta,f)\in \mathcal{E}_{0}\times \mathcal{E}$. $\blacksquare$
\end{proof}

\bibliographystyle{plain}

%\vspace{-.1in}
%\bigskip\noi
%A. Budhiraja\\
%Department of Statistics and Operations Research\\
%University of North Carolina\\
%Chapel Hill, NC 27599, USA \\
%\printead{e1}
%\bigskip\noi\\
%P. Dupuis\\
%Division of Applied Mathematics\\
%Brown University\\
%Providence, RI 02912, USA \\
%\printead{e2}
%\bigskip\noi\\
%V. Maroulas\\
%Department of Statistics and Operations Research\\
%University of North Carolina\\
%Chapel Hill, NC 27599, USA \\
%\printead{e3}
\end{document}